\newcommand{\rmut}{\mathfrak{R}}
\newcommand{\QG}{$\QQ$-Gorenstein }
\newcommand{\CC}{\mathbb{C}}
\newcommand{\QQ}{\mathbb{Q}}
\newcommand{\RR}{\mathbb{R}}
\newcommand{\ZZ}{\mathbb{Z}}
\newcommand{\cp}[1]{\mathbb{CP}^{#1}}
\newcommand{\matr}[4]{\left(\begin{array}{cc}#1 & #2\\ #3 & #4\end{array}\right)}
\newcommand{\vect}[2]{\left(\begin{array}{c}#1\\#2\end{array}\right)}
\def\chk#1{#1^{\smash{\scalebox{.7}[1.4]{\rotatebox{90}{\textnormal{\guilsinglleft}}}}}}
\newtheorem{Theorem}{Theorem}[section]
\newtheorem{Lemma}[Theorem]{Lemma}
\newtheorem{Corollary}[Theorem]{Corollary}
\theoremstyle{remark}
\newtheorem{Remark}[Theorem]{Remark}
\theoremstyle{definition}
\newtheorem{Definition}[Theorem]{Definition}
\newtheorem{exm}[Theorem]{Example}
\title{Antiflips, mutations, and unbounded symplectic embeddings of rational homology balls}
\author{Jonathan David Evans and Giancarlo Urz\'{u}a}
\begin{document}
\maketitle
\begin{abstract}
  The Milnor fibre of a \(\mathbb{Q}\)-Gorenstein smoothing of a Wahl
  singularity is a rational homology ball $B_{p,q}$. For a canonically
  polarised surface of general type $X$, it is known that there are
  bounds on the number $p$ for which $B_{p,q}$ admits a symplectic
  embedding into $X$. In this paper, we give a recipe to construct
  unbounded sequences of symplectically embedded $B_{p,q}$ into
  surfaces of general type equipped with {\em non-canonical}
  symplectic forms. Ultimately, these symplectic embeddings come from
  Mori's theory of flips, but we give an interpretation in terms of
  almost toric structures and mutations of polygons. The key point is
  that a flip of surfaces, as studied by Hacking, Tevelev and
  Urz\'{u}a \cite{HTU}, can be formulated as a combination of
  mutations of an almost toric structure and deformation of the
  symplectic form.
\end{abstract}


\section{Introduction}

\subsection{Setting and results}

Wahl singularities are the cyclic quotient surface singularities
admitting a \QG smoothing whose Milnor fibre is a rational homology
ball \cite{LooijengaWahl,Wahl}. The rational homology balls $B_{p,q}$
arising this way are Stein manifolds whose Lagrangian skeleton is a
certain cell complex called a {\em Lagrangian pinwheel} $L_{p,q}$,
with one 1-cell and one 2-cell
\cite{ESMarkov,KhoBounds,LekiliMaydanskiy}. If $X$ is an algebraic
surface, one can hope to understand which Wahl singularities can
appear in degenerations of $X$ by studying the symplectic embeddings
of rational homology balls $B_{p,q}$ (or, equivalently, Lagrangian
embeddings of pinwheels $L_{p,q}$) in $X$.

In \cite{ESbound}, it was proved that for a symplectic 4-manifold
$(X,\omega)$, with $b^+>1$ and $[\omega]=K_X$ (which one can think of
as a surface of general type with positive geometric genus), there is
a bound on the integers $p$ for which there is a symplectic embedding
of the rational homology ball $B_{p,q}$ into $X$ (equivalently, by
(Lemmas 3.3, 3.4, \cite{KhoBounds}), a Lagrangian pinwheel of type
$L_{p,q}$). Namely, if $\ell$ denotes the length of the continued
fraction expansion of $\frac{p^2}{pq-1}$, we have \[\ell\leq 4K^2+7.\]
This implies a bound on $p$. (Compare with the similar proof of the
better bound $\ell\leq 4K^2+1$ in the context of algebraic geometry in
\cite{RanaUrzua}.)

In the current paper, we will show that the hypothesis $[\omega]=K_X$
in this result is necessary. We do this by exhibiting symplectic
4-manifolds which admit sequences of embedded Lagrangian pinwheels
$\{L_{p_i,q_i}\}_{i=1}^\infty$ where $p_i\to\infty$.

The sequences $(p_i,q_i)$ in question all satisfy a certain recursion
relation which arises in Mori's theory of flips; we call them {\em
Mori sequences}. A Mori sequence is determined by its first two terms;
we therefore write $M(p_1,q_1;p_2,q_2)$ to specify a Mori
sequence. See Section \ref{sct:mori} for the definition.

Our construction applies very widely and yields unbounded Lagrangian
pinwheels in any surface of general type which arises as a smoothing
of a suitable KSBA-stable surface. The only requirement is that the
KSBA-stable surface has at worst Wahl singularities and contains a
suitable rational curve passing through at most two of these
singularities (see Theorem \ref{thm:ksba} for a precise statement). We
illustrate the applicability of the construction with two examples,
one with $b^+>1$ and one with $b^+=1$:

\begin{Theorem}\label{thm:mainthm}
In each of the cases listed below, $X$ carries a symplectic form
$\omega$ for which there is a sequence of Lagrangian pinwheels
$L_{p_i,q_i}\subset (X,\omega)$, for the given Mori sequence
$\{(p_i,q_i)\}_{i=1}^{\infty}$:
\begin{itemize}
\item $X$ is a quintic surface ($b^+=9$), with Mori sequence
\end{itemize}
\[M(1,0;5,3)=\{(1,1),(5,3),(14,9),(37,24),(97,63),(254,165),\ldots\}.\]
\begin{itemize}
\item $X$ is a simply-connected Godeaux surface\footnote{A {\em Godeaux
surface} is a minimal surface of general type with $K^2=1$; the
simply-connected ones are homeomorphic to $\cp{2}\#
8\overline{\cp{2}}$.} ($b^+=1$), with Mori sequence
\end{itemize}
\[M(5,2;39,17)=\{(5,2),(39,17),(268,49),(1837,326),(12591,2233),\ldots\}.\]

\end{Theorem}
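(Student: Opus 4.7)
The plan is to reduce each case to an application of Theorem \ref{thm:ksba}. For each surface $X$ in the statement, I would exhibit a KSBA-stable surface $X_0$ with only Wahl singularities, a one-parameter \QG smoothing whose general fibre is $X$, and a rational curve $C\subset X_0$ passing through at most two of the Wahl points and encoding the initial terms of the advertised Mori sequence. Theorem \ref{thm:ksba} then automatically equips the nearby smooth fibre with a symplectic form $\omega$ carrying the Lagrangian pinwheels $L_{p_i,q_i}$, where $(p_i,q_i)$ is generated from the seeds by the Mori recursion; the growth of the recursion forces $p_i\to\infty$, so the substantive work is confined to the construction of the two KSBA models.

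For the quintic surface, the seeds $M(1,0;5,3)$ indicate that $X_0$ should carry a single Wahl singularity of type $\frac{1}{25}(1,14)$, with the smooth seed $(1,0)$ recorded by a transverse local branch of a rational curve through that point. I would locate $X_0$ among the known \QG degenerations of smooth quintic hypersurfaces to stable quintic-like surfaces acquiring a Wahl singularity (limits of smooth quintics along a one-parameter family in the KSBA moduli space), and verify that a rational curve on $X_0$ meets the Wahl point with the local index required by Theorem \ref{thm:ksba}.

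For the simply-connected Godeaux case, the seeds $M(5,2;39,17)$ suggest that $X_0$ has two Wahl singularities, of types $\frac{1}{25}(1,9)$ and $\frac{1}{39^2}(1,39\cdot17-1)$, joined by a single rational curve with prescribed local intersection data at each end. Simply-connected Godeaux surfaces are known to arise as \QG smoothings of singular rational surfaces bearing Wahl singularities (as in the Lee--Park and Park--Park--Shin constructions), and I would identify a model in such a family in which the configuration of Wahl points and rational curves matches the initial Mori data.

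The real content of the theorem, and the main obstacle, is the construction and verification of these two explicit KSBA-stable models together with the required rational curves. Once that geometric input is supplied, the mutation-and-deformation procedure of Theorem \ref{thm:ksba} iterates along the Mori sequence to yield the unbounded families of Lagrangian pinwheels in the stated symplectic $4$-manifolds; one then only needs to observe that the new symplectic form $\omega$ differs from the canonical one (this is already forced by the bound recalled in the introduction, since $p_i$ eventually exceeds any bound coming from $K_X^2$).
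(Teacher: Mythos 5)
Your overall strategy---find a KSBA\hbox{-}stable degeneration $X_0$ of the target surface, exhibit a rational curve through its Wahl points, and feed this into Theorem~\ref{thm:ksba}---is indeed the paper's strategy, and the two Lemmas that carry the load in the paper (Lemmas~\ref{lma:quintic} and~\ref{lma:godeaux}) do exactly what you describe as ``the real content.'' But there is a concrete misunderstanding in the middle of your plan that would send you looking for the wrong KSBA models, and those models cannot exist.

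You have conflated the Mori sequence seeds $(p_1,q_1),(p_2,q_2)$ with the Wahl singularities on the stable surface $X_0$. In Theorem~\ref{thm:ksba}, only the \emph{first} Mori seed $(p_1,q_1)$ appears as one of the two singularities $x_0,x_1$ of $X_0$ on the curve $C$; the other singularity has type determined by a pair $(p_0,q_0)$ that is \emph{not} in the Mori sequence, and $(p_2,q_2)$ is \emph{computed} by the initial antiflip formula in Lemma~\ref{lma:antiflipeffect}, $p_2=\delta p_1+p_0$, etc. Concretely: for the quintic, the paper's $X_0$ has a single Wahl singularity of type $\frac{1}{4}(1,1)$ (so $(p_0,q_0)=(2,1)$) and the curve $C$ has $\tilde C^2=-3$, giving $\Pi^+=\Pi(2,1,1,1,3,\tfrac32)$; the antiflip then produces $\Pi^-=\Pi(1,0,5,3,1,a^-)$ and hence $M(1,0;5,3)$. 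There is no $\frac{1}{25}(1,14)$ point on $X_0$. For the Godeaux, the two Wahl points on $C$ have types $\frac{1}{16}(1,11)$ and $\frac{1}{25}(1,9)$ (so $(p_0,q_0)=(4,3)$, $(p_1,q_1)=(5,2)$) and $c=1$, giving $\delta=7$ and $\Pi^-=\Pi(5,2,39,17,1,a^-)$. There is no $\frac{1}{39^2}(1,662)$ point on $X_0$.

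Your version cannot be repaired by just finding a different $X_0$: if $X_0$ had Wahl points $(5,2)$ and $(39,17)$ joined by a curve with shear invariant $c=1$ (which is forced if the corresponding polygon is to be right\hbox{-}mutable), then $\sigma(\Pi)=p_2q_1-p_1q_2=78-85=-7<0$, so the polygon is $K$-negative and $K_{X_0}\cdot C<0$; that is incompatible with $K_{X_0}$ being ample, hence with KSBA\hbox{-}stability. The same problem occurs for your proposed quintic model. This is precisely why the paper needs the initial antiflip: the KSBA side is $K$-positive (where $[\omega]=K$), and the Mori sequence of pinwheels lives on the $K$-negative side reached only after deforming the symplectic form. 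The deformation is not an optional bookkeeping step at the end---it is the mechanism that makes the construction consistent with ampleness of $K_{X_0}$. Once you replace your guessed singularity data with the correct $(p_0,q_0;p_1,q_1;c)$ for each case and leave the production of $(p_2,q_2)$ to the antiflip, your plan matches the paper's proof.
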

\begin{Remark}
In fact, with essentially no extra work, we can also find a
symplectic form on the quintic containing the Mori sequence
\[M(2,1;7,5)=\{(2,1),(7,5),(19,14),(50,37),(131,97),(343,254),\ldots\}\]
of Lagrangian pinwheels, and a symplectic form on the same Godeaux
surface with the Mori sequence
\[M(4,1;33,10)=\{(4,1),(33,10),(227,69),(1556,473),(10665,3242),\ldots\}\]
of Lagrangian pinwheels. In the proof of Theorem \ref{thm:mainthm},
we will focus for convenience on {\em right mutations} and {\em
right initial antiflips}, but running the same arguments with left
mutations and left initial antiflips gives these other sequences.

\end{Remark}
\begin{Remark}
Our construction is a generalisation of the constructions by
Khodorovskiy \cite{KhoBalls}, Park-Park-Shin \cite{PPSBalls}, Owens
\cite{OwensBalls} and Park-Shin \cite{PSBalls}; we additionally keep
track of the symplectic form.

\end{Remark}
\begin{Remark}
It follows from the proof that the symplectic forms $\omega$ are
deformation equivalent to the forms representing the canonical class
$K$ coming from the canonical embedding, however our forms have
$[\omega]\neq K$. Since forms in the class $K$ admit only bounded
Lagrangian pinwheels, it is an interesting question to determine how
far one needs to deform $\omega$ away from the class $K$ before one
sees Mori sequences of pinwheels. We will discuss this in Section
\ref{sct:variation}, where we observe that our construction produces
unbounded pinwheels when the symplectic form crosses an affine
distance $\delta$ from the canonical class, where $\delta\geq 2$ is
an integer which shows up in the recursion formula for the Mori
sequence. It is not clear if this gap is an artefact of our
construction, and that there are unbounded pinwheels closer to the
canonical class, or if boundedness for pinwheels really persists in
some neighbourhood of the canonical class.

\end{Remark}
\subsection{Idea of proof}

The idea of the proof is to deform the symplectic form along a compact
codimension zero submanifold $U\subset X$. The submanifold $U$ has the
rational homology of $\cp{1}$ and $\partial U$ is a lens space. We
will exhibit a 1-parameter family of symplectic forms $\omega_t$ on
$U$ such that $(U,\omega_0)$ is negatively monotone and $(U,\omega_1)$
is positively monotone. The symplectic manifolds $(U,\omega_t)$ are
all symplectomorphic in a neighbourhood of $\partial U$, so the
deformation $\omega_t$ extends to a deformation of symplectic
structures on $X$ which is constant outside $U$. We call this
deformation an {\em initial antiflip} of the symplectic form.

We will then show that $(U,\omega_1)$ contains Mori sequences of
Lagrangian pinwheels. We prove this by giving an almost toric
structure on $(U,\omega_1)$ in which the pinwheels $L_{p_1,q_1}$ and
$L_{p_2,q_2}$ are visible surfaces, then performing an infinite
sequence of mutations\footnote{In the language of \cite{Symington}, a
mutation is a {\em branch move} which switches one of the branch cuts
in the almost toric structure for one pointing in the opposite
direction. The terminology {\em mutation} comes from the paper of
Galkin and Usnich \cite{GalkinUsnich}; the definition there is given
for the fan (rather than polytope) side of toric geometry.} to get
different almost toric structures on $U$ in which the pinwheels
$L_{p_i,q_i}$ and $L_{p_{i+1},q_{i+1}}$ are visible. We need to be
careful with our deformation of symplectic forms to ensure that there
is ``enough room'' in $U$ for an infinite sequence of mutations to be
performed.

This initial antiflip is related to the k2A 3-fold flip discovered by
Mori \cite{Mori} and further studied in \cite{HTU}. Roughly speaking,
the total space $\mathcal{X}$ of a \QG smoothing $\mathcal{X}\to\CC$
of a singular algebraic surface $\mathcal{X}_0$ can sometimes be
flipped to give a new \QG smoothing $\mathcal{X}^+\to\CC$ of a
different singular surface $\mathcal{X}^+_0$ without affecting any of
the smooth fibres: $\mathcal{X}_z\cong\mathcal{X}^+_z$ for $z\neq
0$. Since $\mathcal{X}_z$ and $\mathcal{X}^+_z$ arise from smoothing
different singularities, they contain the Milnor fibres of those
singularities. The same singular surface $\mathcal{X}^+_0$ can arise
when performing the flip of many different \QG smoothings
$\mathcal{X}$ of different singular surfaces $\mathcal{X}_0$ (indeed,
a whole Mori sequence of them).

This whole paper can be read as a symplectic topologist's guide to
\cite{HTU}, presenting those parts of that paper which can be cast
purely in terms of symplectic topology.

\subsection{Outline}

In Section \ref{sct:QHP}, we define {\em rational homology projective
lines} (QHPs) and construct toric orbifold QHPs, $V_\Pi$, from
polygons $\Pi$ which we call truncated wedges. We then construct
smooth QHPs, $U_\Pi$, as symplectic smoothings of these toric
QHPs. These manifolds are equipped with an almost toric fibration with
visible Lagrangian pinwheels.

In Section \ref{sct:mutations}, we study when the almost toric
fibrations on $U_\Pi$ can be mutated to give new almost toric
fibrations. This allows us to construct infinite sequences of visible
Lagrangian pinwheels corresponding to Mori sequences. In Section
\ref{sct:mori}, we define Mori sequences and summarise their
asymptotic behaviour. In Section \ref{sct:infinitemut} also discuss
when infinitely many mutations can be performed in a bounded region of
a truncated wedge.

In Section \ref{sct:mutfail}, we study those truncated wedges which
cannot be mutated and introduce a new operation which involves a
deformation of the symplectic form followed by a mutation. This leads
us to the {\em initial antiflip} of a symplectic form and its inverse,
the {\em flip}. The initial antiflip is a deformation of the
symplectic form, and, in Section \ref{sct:variation}, we discuss how
the cohomology class of $\omega$ varies along this deformation. In
Section \ref{sct:morilink}, we explain the link to Mori theory; in
Section \ref{sct:k1A}, we give the interpretation of k1A flips in our
setting; and, in Section \ref{sct:top}, we give a summary of how to
view the flip and antiflips topologically.

Finally, in Section \ref{sct:examples}, we give an algebro-geometric
recipe for constructing examples to which the theory applies and we
explain the examples stated in Theorem \ref{thm:mainthm}.

\subsection{Notation}

We will write $[b_1,\ldots,b_r]$ to mean both:
\begin{itemize}
\item a chain of spheres $C_1,\ldots,C_r$ which intersect according to the
graph

\begin{tikzpicture}[baseline=-0.5ex]
\node at (0,0) (C1) {$\bullet$};
\node at (1,0) (C2) {$\bullet$};
\node at (2,0) (C3) {$\cdots$};
\node at (3,0) (C4) {$\bullet$};
\node at (4,0) (C5) {$\bullet$};
\node at (C1) [below] {$C_1$};
\node at (C2) [below] {$C_2$};
\node at (C4) [below] {$C_{r-1}$};
\node at (C5) [below] {$C_r$};
\draw (C1.center) -- (C2.center) -- (C3);
\draw (C3) -- (C4.center) -- (C5.center);
\end{tikzpicture}
with self-intersections $C_i^2=-b_i$.
\item the continued fraction
\[[b_1,\ldots,b_r]=b_1-\frac{1}{b_2-\frac{1}{\cdots-\frac{1}{b_r}}}.\]
\end{itemize}
If we write $[b_{1,1},\ldots,b_{1,r_1}]-c-[b_{2,1},\ldots,b_{2,r_2}]$
we mean the chain
\[[b_{1,1},\ldots,b_{1,r_1},c,b_{2,1},\ldots,b_{2,r_2}],\] but where
we group together certain spheres which we wish to collapse down to a
singular point (or which have just arisen from resolving a singular
point).

\subsection{Acknowledgements}

JE is supported by EPSRC grant EP/P02095X/1. GU is supported by the
FONDECYT regular grant 1190066. The authors would like to thank: Ivan
Smith and Paul Hacking for helpful correspondence and conversations;
Nick Lindsay for helping us pinpoint a reference for the symplectic
suborbifold neighbourhood theorem; Daniele Sepe for pointing us
towards \cite{NgocSepe}; Anne-Sophie Kaloghiros for linguistic advice;
and an anonymous referee for their helpful comments.

\section{Rational homology projective lines}
\label{sct:QHP}

\begin{Definition}
A {\em rational homology projective line} (QHP) will mean a
4-dimensional manifold or orbifold $X$ with $H_*(X;\QQ)\cong
H_*(\cp{1};\QQ)$.

\end{Definition}
We will give a recipe for constructing symplectic QHPs as smoothings
of symplectic orbifold QHPs.

\subsection{Toric QHP-orbifolds: $V_\Pi$}

\subsubsection{Truncated wedges}

Given coprime integers $\Delta,\Omega$ with $0\leq \Omega<\Delta$, let
$\pi(\Delta,\Omega)$ denote the wedge
\[\pi(\Delta,\Omega):=\{(x,y)\in\RR^2\ :\ x\geq 0,\ \Delta y\geq
\Omega x\}.\]

\begin{center}
\begin{tikzpicture}[scale=0.8]
\filldraw[draw=black,thick,->,fill=lightgray] (0,3) -- (0,0) -- (11,3);
\draw[step=1.0,black,thin,dotted] (0,0) grid (11,3);
\node at (1.5,1.5) {$\pi(\Delta,\Omega)$};
\node at (11,3) [above] {$(\Delta,\Omega)$};
\end{tikzpicture}

\end{center}
This is the moment polygon for a Hamiltonian torus action on the
cyclic quotient singularity\footnote{The cyclic quotient singularity
$\frac{1}{\Delta}(1,\Omega)$ is the quotient of $\CC^2$ by the action
of the group of $\Delta^{th}$ roots of unity given by
$\mu\cdot(x,y)=(\mu x,\mu^{\Omega}y)$.} $\frac{1}{\Delta}(1,\Omega)$.

Let $m,n$ be coprime integers with $n>0$ and let $h>0$ be a real
number. Consider the half-space $H_{m,n;h}=\{(x,y)\in\RR^2\ :\
mx+ny\geq h\}$ and the truncation
$\Pi=H_{m,n;h}\cap\pi(\Delta,\Omega)$.

\begin{center}
\begin{tikzpicture}[scale=0.8]
\filldraw[draw=black,thick,->,fill=lightgray] (0,3) -- (0,2) -- (22/25,6/25) -- (11,3);
\draw[step=1.0,black,thin,dotted] (0,0) grid (11,3);
\node at (1.5,1.5) {$\Pi$};
\pattern[pattern=north east lines] (-1,4)--(-0.9,4.1)--(2.1,-1.9)--(2,-2)--cycle;
\draw (-1,4) -- (2,-2);
\node at (2,-1) [right] {$H_{m,n;h}$};
\end{tikzpicture}

\end{center}
This truncated wedge is the moment image of a partial resolution
$V_\Pi$ of the cyclic quotient singularity. The vertices $x_1$ and
$x_2$ of $\Pi$ are the images under the moment map of cyclic quotient
singularities (abusively, also called $x_1,x_2$) in $V_\Pi$; if $x_i$
has type $\frac{1}{P_i}(1,Q_i)$ then:
\begin{itemize}
\item $P_1=n$, $Q_1=-m\mod P_1$,
\item $P_2=m\Delta+n\Omega$, $Q_2=k\Delta+\ell\Omega\mod P_2$, where
$kn-\ell m=1$.
\end{itemize}
We will also abusively say that the vertices $x_i$ have type
$\frac{1}{P_i}(1,Q_i)$.

\begin{Definition}
We will say that a vertex of a polygon is a {\em Wahl vertex} if it
has type $\frac{1}{p^2}(1,pq-1)$ for some coprime integers $0\leq
q\leq p\neq 0$ (Wahl singularities are precisely the cyclic quotient
surface singularities of this type, see ({\cite[Remark
5.10]{LooijengaWahl}})). Below, $x_i$ will be a Wahl vertex of type
$\frac{1}{p_i^2}(1,p_iq_i-1)$.

\end{Definition}
\begin{Remark}\label{rmk:warning}
Note that we allow $(p,q)=(1,1)$ and $(p,q)=(1,0)$, both of which
represent a smooth point in $V_\Pi$. In order for our formulae below
to work out, we must only ever use $(1,0)$ for a smooth point $x_1$
and $(1,1)$ for a smooth point $x_2$. If you accidentally plug in
$p_1=q_1=1$ or $p_2=1$, $q_2=0$, then you will get the wrong
answers.

\end{Remark}
\subsubsection{Shear invariant}

Let $\Pi$ be a truncated wedge. Let $E_\Pi$ denote the edge between
$x_1$ and $x_2$ and let $C_\Pi\subset V_\Pi$ denote the corresponding
component of the toric boundary; $C_\Pi$ is a rational curve which
generates $H_2(V_{\Pi};\QQ)$.

\begin{Definition}
The {\em shear invariant} of $\Pi$ is defined to be the integer $c$
such that $\tilde{C}_{\Pi}^2=-c$, where $\tilde{C}_\Pi$ is the
proper transform of $C_\Pi$ in the minimal resolution
$\tilde{V}_\Pi\to V_\Pi$.

\end{Definition}
The reason for the name is visible in the standard moment polygon for
the total space of the line bundle $\mathcal{O}(-c)\to\cp{1}$, which
is a truncated wedge with shear invariant $c$, with the zero-section
(self-intersection $-c$) living over the compact edge:

\begin{center}
\begin{tikzpicture}
\filldraw[draw=black,thick,->,fill=lightgray] (0,1) -- (0,0) -- (1,0) -- (3,1);
\node at (3,1) [above] {$(1,c)$};

\end{tikzpicture}
\end{center}
\subsubsection{Constructing polygons}

\begin{Definition}\label{dfn:model}
Given a real number $a>0$ and integers $p_1,q_1,p_2,q_2,c$ such that
$0\leq q_1<p_1$, $0<q_2\leq p_2$ and $\gcd(p_i,q_i)=1$ for $i=1,2$,
define the polygon
\begin{align*}
\Pi(p_1,q_1,p_2,q_2,c,a):=\left\{(x,y)\in\RR^2\ :\ \right.&y\geq 0,\\
&p_1^2x\geq y(p_1(p_1-q_1)-1)\\
&\left.p_2^2(x-a)\leq y(cp_2^2-p_2q_2+1)\right\}.
\end{align*}

\end{Definition}
\begin{Remark}
As mentioned in Remark \ref{rmk:warning}, this definition does not
allow $(p_1,q_1)=(1,1)$ and $(p_2,q_2)=(1,0)$; rather, you should
use $(p_1,q_1)=(1,0)$ and $(p_2,q_2)=(1,1)$).

\end{Remark}
The polygon $\Pi:=\Pi(p_1,q_1,p_2,q_2,c,a)$ has:
\begin{itemize}
\item one horizontal compact edge $E_\Pi$ of affine length $a$,
\item two noncompact edges:
+ $R_1$, emanating from the origin and pointing in the direction
\[\vect{p_1(p_1-q_1)-1}{p_1^2}\]
+ $R_2$, emanating from the point $(a,0)$ and pointing in the
direction \[\vect{cp_2^2-p_2q_2+1}{p_2^2}.\]
\item Wahl vertices $x_i$ of type $\frac{1}{p_i^2}(1,p_iq_i-1)$,
\item shear invariant $c$.
\end{itemize}
We illustrate the polygon $\Pi$ below.

\begin{center}
\begin{tikzpicture}[scale=0.7]
\filldraw[draw=black,thick,fill=lightgray] (1,4) node [above] {$(p_1(p_1-q_1)-1,p_1^2)$} -- (0,0) -- (7,0) -- (10,5) node [above] {$(cp_2^2-p_2q_2+1,p_2^2)$};
\node at (0.5,2) [left] {$R_1$};
\node at (8.5,2.5) [right] {$R_2$};
\node at (3.5,0) [below] {$E_\Pi$};
\draw [decorate,decoration={brace,amplitude=2pt,raise=4pt},yshift=0pt]
(0,0) -- (7,0) node [sloped,midway,above=0.3] {$a$};
\draw[dotted] (1,4) -- (1.25,5);
\draw[dotted] (10,5) -- (10.6,6);
\end{tikzpicture}

\end{center}
This relates to our earlier description of polygons as truncations of
$\pi(\Delta,\Omega)$ in the following way:

\begin{Lemma}\label{lma:trunc}
Given a polygon $\Pi:=\Pi(p_1,q_1,p_2,q_2,c,a)$, define
\begin{align*}
\sigma(\Pi)&:=(c-1)p_1p_2+p_2q_1-p_1q_2,\\
\Delta(\Pi)&=p_1^2+p_2^2+\sigma(\Pi)p_1p_2,\\
\Omega(\Pi)&=p_1q_1+p_2q_2-1+\sigma(\Pi)p_2q_1-(c-1)p_2^2\mod\Delta(\Pi).
\end{align*}
If $\Delta(\Pi)>0$ then $\Pi$ is \(\ZZ\)-affine isomorphic to a
truncation of $\pi(\Delta(\Pi),\Omega(\Pi))$.
\end{Lemma}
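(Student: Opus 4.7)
The strategy is to produce a $\ZZ$-affine transformation (an element of $SL(2,\ZZ)\ltimes\RR^2$) carrying the unbounded wedge obtained by extending the two noncompact edges $R_1,R_2$ of $\Pi$ onto the standard wedge $\pi(\Delta(\Pi),\Omega(\Pi))$. The compact edge $E_\Pi$ then lies on the boundary of a half-plane $H_{m,n;h}$, exhibiting $\Pi$ as the required truncation.

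First I would check that the direction vectors $v_1=(p_1(p_1-q_1)-1,\ p_1^2)$ and $v_2=(cp_2^2-p_2q_2+1,\ p_2^2)$ are primitive integer vectors; this is immediate upon reducing their first coordinates modulo $p_1$ and $p_2$ respectively. A direct determinant computation yields
\[\det(v_1,v_2)=-\bigl(p_1^2+p_2^2+p_1p_2\,\sigma(\Pi)\bigr)=-\Delta(\Pi),\]
so under the hypothesis $\Delta(\Pi)>0$ the two extended edges meet at a unique point $P$ lying below the $x$-axis, which I would translate to the origin.

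Next I would write down an explicit $A\in SL(2,\ZZ)$ sending $v_1$ to $(0,1)$. A natural choice is
\[A=\begin{pmatrix} p_1^2 & 1-p_1(p_1-q_1) \\ p_1q_1-1 & 1-q_1(p_1-q_1)\end{pmatrix},\]
whose unit determinant follows from the identity $(p_1q_1-1)(p_1(p_1-q_1)-1)\equiv 1\pmod{p_1^2}$, easily verified by expansion. Then $(Av_2)_1=\Delta(\Pi)$ is forced by $\det A=1$ together with the determinant above, and the central assertion is that $(Av_2)_2\equiv\Omega(\Pi)\pmod{\Delta(\Pi)}$. A final shear fixing $(0,1)$ (i.e., adding integer multiples of $\Delta(\Pi)$ to the second coordinate) places the residue in $[0,\Delta(\Pi))$, yielding exactly $\pi(\Delta(\Pi),\Omega(\Pi))$.

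The main obstacle is the modular arithmetic underlying the congruence $(Av_2)_2\equiv\Omega(\Pi)\pmod{\Delta(\Pi)}$. Expanding the difference and collecting terms, the discrepancy factors as $p_2q_1\cdot\bigl((c-1)p_1p_2+p_2q_1-p_1q_2-\sigma(\Pi)\bigr)$, which vanishes by the very definition of $\sigma(\Pi)$. Once this identification is in place, the line $\{y=0\}$ containing $E_\Pi$ is carried by the composed transformation to a line whose defining half-plane is the promised $H_{m,n;h}$, with $(m,n)$ read off (up to sign) as the image of the covector dual to $(0,1)$ under $A^{-T}$.
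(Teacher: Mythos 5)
Your proposal is correct and follows essentially the same line as the paper: the matrix you choose,
\[A=\matr{p_1^2}{1-p_1(p_1-q_1)}{p_1q_1-1}{1-q_1(p_1-q_1)}=\matr{p_1^2}{1-p_1^2+p_1q_1}{p_1q_1-1}{1-p_1q_1+q_1^2},\]
is literally the matrix used in the paper's proof, and the final shear by $\matr{1}{0}{-k}{1}$ to normalise $\Omega'$ modulo $\Delta$ is also the same. The only difference is cosmetic: you streamline the check that $(Av_2)_1=\Delta$ by appealing to $\det(v_1,v_2)=-\Delta$ and $\det A=1$ instead of multiplying out, and you add the geometric remark about the wedge apex lying below the $x$-axis and the explicit mention of the translation needed to line it up with the origin; the paper leaves that translation implicit in the phrase ``$\ZZ$-affine isomorphic.''
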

\begin{proof}
We may apply the matrix
\[\matr{p_1^2}{1-p_1^2+p_1q_1}{p_1q_1-1}{1-p_1q_1+q_1^2}\] to $\Pi$
to move the edge $R_1$ so that it points in the direction $(0,1)$;
this moves $R_2$ into the direction
\[\vect{\Delta}{\Omega'}=\vect{p_1^2+p_2^2+\sigma
p_1p_2}{p_1q_1+p_2q_2-1+\sigma p_2q_1-(c-1)p_2^2}\] where
$\sigma=(c-1)p_1p_2+p_2q_1-p_1q_2$.

If $\Omega'=k\Delta+\Omega$, where $0\leq\Omega<\Delta$, then
shearing using the matrix $\matr{1}{0}{-k}{1}$ allows us to see
$\Pi$ as a truncation of the wedge $\pi(\Delta,\Omega)$. \qedhere

\end{proof}
\begin{Remark}
The number $\sigma(\Pi)$ from Lemma \ref{lma:trunc} has geometric
meaning: it is equal to $p_1p_2K_{V_\Pi}\cdot C_\Pi$. This means
that $V_\Pi$ is \(K\)-positive or \(K\)-negative if $\sigma(\Pi)$ is
positive or negative respectively. We will say that our polygon
$\Pi$ is {\em \(K\)-positive} or {\em \(K\)-negative} according to
the sign of $\sigma(\Pi)$.

\end{Remark}
\begin{Remark}\label{rmk:deltapos}
The condition $\Delta(\Pi)>0$ is equivalent to requiring that the
rays $R_1$ and $R_2$ do not intersect, which is necessary for $\Pi$
to be \(\ZZ\)-affine equivalent to a truncated wedge.

\end{Remark}
Instead of specifying $p_1,q_1,p_2,q_2,c$, we can equivalently specify
the chain $[b_{1,1},\ldots,b_{1,r_1}]-c-[b_{2,1},\ldots,b_{2,r_2}]$
where:
\begin{align*}
\tilde{C}_\Pi^2&=-c,\\
\frac{p_i^2}{p_iq_i-1}&=[b_{i,1},\ldots,b_{i,r_i}]
\end{align*}

\begin{Remark}
This chain of spheres
$[b_{1,1},\ldots,b_{1,r_1}]-c-[b_{2,1},\ldots,b_{2,r_2}]$ arises in
the minimal resolution of $\tilde{V}_\Pi\to V_\Pi$ as the preimage
of $C_\Pi$. Note that $\tilde{V}_\Pi$ is also toric; its moment
polygon $\tilde{\Pi}$ is obtained from $\Pi$ by a sequence of
truncations at non-Delzant vertices (see Figure
\ref{fig:minres}). With our conventions, in the minimal resolution
of a vertex of type $\frac{1}{P}(1,Q)$, the exceptional spheres with
self-intersections $-b_1,\ldots,-b_r$ with $P/Q=[b_1,\ldots,b_r]$
are encountered in that order as one moves {\em anticlockwise} around
the boundary of $\tilde{\Pi}$. Reversing the order corresponds to
replacing $Q$ by its multiplicative inverse modulo $P$ (if $P=p^2$,
$Q=pq-1$, this means replacing $q$ by $p-q$).

\end{Remark}
\begin{figure}
\begin{center}
\begin{tikzpicture}[scale=0.6]
\filldraw[draw=black,fill=lightgray,opacity=0.5] (0,4) -- (0,0) -- (10,4);
\filldraw[draw=black,fill=lightgray] (0,4) -- (0,1) -- (2,1) -- (5,2) -- (10,4);
\draw[step=1.0,black,thin,dotted] (0,0) grid (10,4);
\node at (2,3) {$\tilde{\pi}(P,Q)$};
\node at (0,1) {$\bullet$};
\node at (2,1) {$\bullet$};
\node at (5,2) {$\bullet$};
\node at (1,1) [above] {$-3$};
\node at (3.5,5/3) [above] {$-2$};
\end{tikzpicture}
\end{center}
\caption{The moment polygon $\tilde{\pi}(5,2)$ for the minimal resolution of $\frac{1}{5}(1,2)$ superimposed on the moment polygon $\pi(5,2)$ for the singularity. We have labelled the self-intersections of the curves in the exceptional locus. The continued fraction expansion of $\frac{5}{2}$ is $3-\frac{1}{2}$, and we see a \(-3\)-sphere and a \(-2\)-sphere as we move around the boundary of $\tilde{\pi}(5,2)$ anticlockwise.}
\label{fig:minres}
\end{figure}
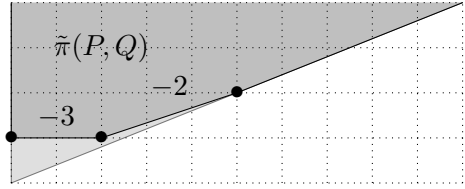

In terms of this chain there is a simple way to compute $\Delta$ and
$\Omega$:
\[\frac{\Delta}{\Omega}=[b_{1,1},\ldots,b_{1,r_1},c,b_{2,1},\ldots,b_{2,r_2}].\]

\subsection{Smooth, almost toric QHPs: $U_\Pi$}

Since $x_1$ and $x_2$ are Wahl singularities, we may symplectically
smooth these points, replacing them with symplectic rational homology
balls $B_{p_i,q_i}$, see \cite{SymingtonQBD}. This operation gives a
smooth symplectic QHP which we denote by $U_\Pi$.

\subsubsection{Almost toric structure}
\label{sct:almosttoric}

The operation of passing from $V_\Pi$ to $U_\Pi$ can be visualised by
means of an almost toric structure on $\Pi$: we perform {\em nodal
trades} at the two vertices of $\Pi$, introducing a branch cut at each
vertex.

\begin{Remark}
We briefly recall Symington's nodal trades \cite{Symington}. We can
modify the affine structure on $\pi(p^2,pq-1)$ by cutting from the
origin along a branch cut in the \((p,q)\)-direction to an interior
terminus $z$, and regluing the two sides using the affine monodromy
matrix $\matr{1+pq}{-p^2}{q^2}{1-pq}$. More precisely, we choose a
coorientation $(q,-p)$ of the branch cut and apply the affine
monodromy to tangent vectors as we cross the branch cut in the
direction of the coorientation (and its inverse if we cross in the
opposite direction). This modification is called a {\em nodal
trade}. The toric fibration on the Wahl singularity
$\frac{1}{p^2}(1,pq-1)$ deforms to give an {\em almost toric
fibration} on $B_{p,q}$. This almost toric fibration is a map from
$B_{p,q}$ to this modified affine surface whose general fibres are
Lagrangian tori; moreover, the affine structure on the base agrees
with the natural one given by local action-angle coordinates on a
Lagrangian fibration. Over the singular point $z$, there is a {\em
focus-focus} singular fibre (nodal torus); living over points in the
boundary there are circles.

\end{Remark}
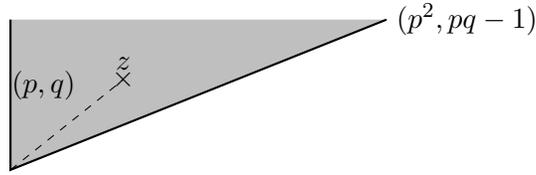
\begin{figure}
\begin{center}
\begin{tikzpicture}
\filldraw[draw=black,thick,fill=lightgray] (0,2) -- (0,0) -- (5,2);
\node at (1.5,1.2) {$\times$};
\node at (1.5,1.2) [above] {$z$};
\draw[dashed] (0,0) -- (1.5,1.2);
\node at (1,.8) [above left] {$(p,q)$};
\node at (5,2) [right] {$(p^2,pq-1)$};
\end{tikzpicture}
\end{center}
\caption{The wedge $\pi(p^2,pq-1)$ together with the branch cut for
performing a nodal trade.}
\end{figure}

To extend this construction to $\Pi$, we perform nodal trades at both
vertices, introducing two branch cuts, $B_1$ and $B_2$, joining the
vertices to interior points $z_1,z_2$. To determine in which direction
the branch cut $B_i$ is to be taken, one must use a \(\ZZ\)-affine
transformation to take a neighbourhood of the vertex in the model
$\pi(p_i^2,p_iq_i-1)$ to a neighbourhood of the vertex $x_i\in\Pi$;
the branch cut should be taken along the image of $(p,q)$ under this
transformation. In our model for $\Pi(p_1,q_1,p_2,q_2,c,a)$ from
Definition \ref{dfn:model}, the branch cuts $B_1$ and $B_2$ for the
nodal trades of are made in the directions:
\begin{itemize}
\item $(p_1-q_1,p_1)$ at vertex $x_1$;
\item $(cp_2-q_2,p_2)$ at vertex $x_2$.

\end{itemize}
\begin{center}
\begin{tikzpicture}[scale=0.8]
\filldraw[draw=black,thick,fill=lightgray] (1,4) node [above] {$(p_1(p_1-q_1)-1,p_1^2)$} -- (0,0) -- (7,0) -- (10,5) node [above] {$(cp_2^2-p_2q_2+1,p_2^2)$};
\node at (0.5,0.5) [below right] {$B_1$};
\node at (7.5,2) [left] {$B_2$};
\node at (0.5,2) [left] {$R_1$};
\node at (8.5,2.5) [right] {$R_2$};
\node at (3.5,0) [below] {$E_\Pi$};
\draw[dashed] (0,0) -- (1,1) node [above right] {$(p_1-q_1,p_1)$};
\draw[dashed] (7,0) -- (8,4) node [above] {$(cp_2-q_2,p_2)$};
\end{tikzpicture}

\end{center}
The manifold $U_\Pi$ admits an almost toric fibration to this new
singular \(\ZZ\)-affine surface:
\begin{itemize}
\item over the points of the interior of $\Pi\setminus\{z_1,z_2\}$, we
have a Lagrangian torus fibre;
\item over $z_1,z_2$ there are a singular Lagrangian fibres (pinched
tori);
\item over each point of the boundary of $\Pi$ we have a circle; the
preimage of the whole boundary is a symplectic cylinder;
\item over the branch cut $B_i$ there lives\footnote{Surfaces like this
which project to lines in the base of an almost toric fibration are
called {\em visible surfaces} in \cite{Symington}.} a Lagrangian
disc \(L_i\) which becomes immersed \(p_i\)-to-\(1\) along its
boundary; this is called a {\em Lagrangian pinwheel}. A
neighbourhood of this pinwheel is the symplectic rational homology
ball $B_{p_i,q_i}$.

\end{itemize}
\begin{Remark}
In general, an almost toric structure can be specified by drawing an
{\em almost toric base diagram}, which is a decorated polygon with
focus-focus singularities and branch cuts indicated. The symplectic
4-manifold on which the almost toric structure lives is
determined\footnote{To reconstruct the symplectic manifold {\em
together with its almost toric structure}, one must make extra
choices at the singularities to determine the asymptotic behaviour
of the period lattice as one approaches the singularity. This was
first worked out by V\~{u} Ngoc \cite{Ngoc}; with this extra data,
the almost toric fibration is determined up to fibred
symplectomorphism {\cite[Theorem 4.60]{NgocSepe}}. Without this
data, the total space is still determined up to symplectomorphism.}
by an almost toric base diagram \cite{Symington}.

\end{Remark}
\subsubsection{The homology of $U_\Pi$}
\label{sct:homologyUPi}

It is easy to see that $U_\Pi$ is a QHP: since
$H_*(B_{p_i,q_i};\QQ)\cong H_*(B^4;\QQ)$, the rational homology of
$U_\Pi$ is isomorphic to that of the orbifold $V_\Pi$, which is
isomorphic to $H_*(\cp{1};\QQ)$. A generator for $H_2(U_\Pi;\QQ)$ can
be described explicitly as follows. The preimage of the edge $E_\Pi$
is a symplectic cylinder $C$ with area equal to the affine length $a$
of $E_\Pi$. Consider the singular chain
\(G_\Pi:=p_1p_2C-p_2L_1-p_1L_2\) (where \(L_i\) are the Lagrangian
discs living over the branch cuts). This is a cycle because \(\partial
L_1=p_1\partial C\) and \(\partial L_2=p_2\partial C\). The evaluation
of \([\omega]\) on its homology class can be computed by integrating
\(\omega\) over \(p_2L_1,p_1L_2,p_1p_2C\) separately, which yields
\(p_1p_2a\) (as the \(L_i\) are Lagrangian). Therefore $G_\Pi$
generates $H_2(U_\Pi;\QQ)$.

\section{Mutations}
\label{sct:mutations}

\subsection{Mutation of polygons}

\begin{Definition}
Suppose we equip a polygon $\Pi$ with the data of an almost toric
base diagram. Given a branch cut $B_z$ emanating from a focus-focus
singularity $z$, let $B'_z$ be the ray emanating from $z$ in the
opposite direction to $B_z$. We assume that $B'_z$ is also disjoint
from the other branch cuts. The line $B_z\cup B'_z$ cuts $\Pi$ into
two pieces $\Pi_{upper}$ and $\Pi_{lower}$ (where the coorientation
points into $\Pi_{upper}$). The {\em mutation} of $\Pi$ along $B_z$
is the polygon $\Pi_{upper}\cup A\Pi_{lower}$ (or, \(\ZZ\)-affine
equivalently, $A^{-1}\Pi_{upper}\cup \Pi_{lower}$), where $A$ is the
affine monodromy across the branch cut $B_z$. The mutated almost
toric base data is unchanged on $\Pi_{upper}$ and transformed by $A$
on $\Pi_{lower}$.

\end{Definition}
\begin{exm}
In Figure \ref{fig:mutpoly} we see a mutation of almost toric
structures on $\cp{2}$. The structure before mutation is obtained
from the standard toric structure by performing nodal trades at each
corner. The affine monodromy for the branch cut $B_z$ is
$A=\matr{0}{1}{-1}{2}$, which is the unique $A\in SL(2,\ZZ)$ which
satisfies both $A(1,1)=(1,1)$ (so it has $B_z$ as an eigendirection)
and $A(1,0)=(0,-1)$ (which means that, after mutation, the origin is
an interior point of a straight edge).

\end{exm}
\begin{figure}[htb]
\centering
\subfigure[Before mutation.]{
\begin{tikzpicture}[baseline=0ex,scale=0.8]
\filldraw[thick,fill=gray!50] (0,0) -- (6,0) -- (0,6) -- (0,0);
\draw[dashed] (0,0) -- (2,2) node {$\times$};
\node at (2,2) [above left] {$z$};
\node at (1,1) [above=2mm] {$B_z$};
\node at (1,3) [above] {$\Pi_{upper}$};
\node at (3,1) [below] {$\Pi_{lower}$};
\draw[dashed] (6,0) -- (4,1) node {$\times$};
\draw[dashed] (0,6) -- (1,4) node {$\times$};
\draw[dotted] (2,2) -- (3,3) node [pos=0.4,above=2mm] {$B'_z$};
\end{tikzpicture}
}\hspace{1cm}
\subfigure[After mutation.]{
\begin{tikzpicture}[baseline=0ex,scale=0.8]
\filldraw[thick,fill=gray!50] (0,6) -- (0,-6) -- (3,3) -- (0,6);
\draw[dotted] (0,0) -- (2,2) node {$\times$};
\node at (2,2) [above left] {$z$};
\node at (1,1) [above=2mm] {$B_z$};
\node at (1,3) [above] {$\Pi_{upper}$};
\node at (1,0) [below] {$A\Pi_{lower}$};
\draw[dashed] (0,-6) -- (1,-2) node {$\times$};
\draw[dashed] (0,6) -- (1,4) node {$\times$};
\draw[dashed] (2,2) -- (3,3) node [pos=0.4,above=2mm] {$B'_z$};
\end{tikzpicture}
}
\caption{An example of mutation between two almost toric base diagrams of $\cp{2}$ (coming from the \QG degeneration of $\cp{2}$ to $\mathbb{P}(1,1,4)$). The line $B_z$ is dashed in (a) and dotted in (b); the line $B'_z$ is dotted in (a) and dashed in (b). Dashed lines are branch cuts; dotted lines indicate linear continuations of branch cuts and are not part of the almost toric data.}\label{fig:mutpoly}
\end{figure}
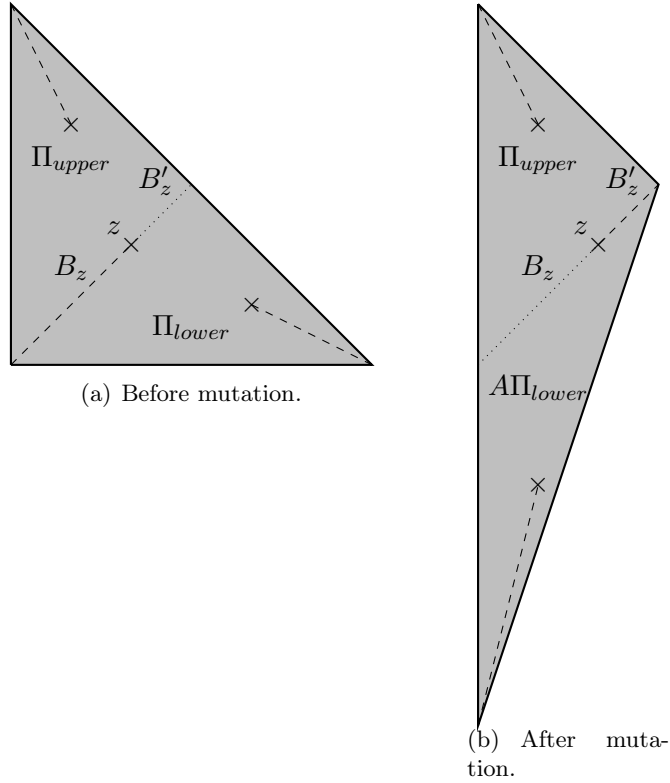

Though the polygons before and after mutation look very different,
this operation does not actually change the associated symplectic
manifold, as we will now prove:

\begin{Lemma}\label{lma:mutationdoesntchangeanything}
Let \(X_1\), \(X_2\) be almost toric symplectic 4-manifolds with
contractible almost toric base diagrams \(B_1\) and \(B_2\). Suppose
that \(B_1\) and \(B_2\) are related by a mutation. Then \(X_1\) and
\(X_2\) are symplectomorphic.
\end{Lemma}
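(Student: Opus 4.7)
The plan is to view the two almost toric base diagrams $B_1$ and $B_2$ as two different polygonal presentations of a single underlying integral affine surface with focus-focus singularities, and then appeal to the fact (\cite{Symington}; see also \cite{NgocSepe}) that a contractible almost toric base diagram determines its total space up to symplectomorphism.

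More concretely, the mutation is supported near the line $B_z\cup B'_z$ through a single focus-focus point $z$; outside a small neighbourhood of this line, $B_1$ and $B_2$ coincide as decorated subsets of $\RR^2$, so it suffices to work locally. Let $S$ denote the intrinsic integral affine surface with a single focus-focus singularity at $z$ (whose invariant line runs along $B_z\cup B'_z$) obtained by regluing $\Pi_{upper}$ and $\Pi_{lower}$ along the cut via the monodromy $A$. Presenting $S$ as a subset of $\RR^2$ requires the choice of a ray at $z$ along which to cut open the monodromy: cutting along $B_z$ and laying the two halves in the plane recovers $B_1$, while cutting along $B'_z$ and then applying $A$ to $\Pi_{lower}$ so that the pieces fit together recovers $B_2$. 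Thus the two base diagrams present the same $S$.

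Once the underlying integral affine surfaces are identified, Symington's classification shows that an almost toric fibration over a contractible base is determined up to symplectomorphism by the base together with its integral affine structure and the position and type of its focus-focus points. Applied to $S$, this yields a symplectomorphism $X_1\cong X_2$.

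The main obstacle, in my view, is making the appeal to the classification precise: one must check not only that the bases agree abstractly as integral affine surfaces but also that all local data at $z$ matches across the mutation. This should be immediate because mutation preserves the invariant line $B_z\cup B'_z$ at $z$ and only exchanges the ray along which the cut is drawn, so the focus-focus singularity presented in $B_1$ is taken to the one in $B_2$. The refined asymptotic invariant of \cite{Ngoc,NgocSepe} pins the fibration down only up to fibred symplectomorphism, so it is not needed here; contractibility of the base is enough to conclude symplectomorphism of the total spaces.
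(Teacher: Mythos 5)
Your argument is essentially the same as the paper's: identify both base diagrams as two planar presentations (fundamental domains / branch-cut choices) of a single underlying integral affine surface with marked focus-focus points, then invoke the classification theorem for almost toric fibrations over contractible bases. The only difference in emphasis is that the paper pins down the classification step via Zung's Theorem 1.5 and explicitly notes that contractibility is used to guarantee vanishing of the Lagrangian Chern class, whereas you gesture at Symington/V\~{u} Ng\d{o}c--Sepe and simply assert contractibility suffices; your argument is correct as stated but would benefit from naming that specific obstruction.
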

\begin{proof}
Let \(f\colon X\to A\) be an almost toric fibration with focus-focus
fibres over the set \(A_{ff}\subset A\). Let \(\tilde{A}\) be the
universal cover of \(A\setminus A_{ff}\). Action-angle coordinates
allow us to define an integral affine structure on \(\tilde{A}\)
which descends to an integral affine structure on \(A\setminus
A_{ff}\). It is this integral affine structure which determines
\(X\) up to symplectomorphism (this follows by {\cite[Theorem
1.5]{Zung}} when the bases are contractible because then the
Lagrangian Chern class automatically vanishes).

An almost toric base diagram \(B\) can be constructed from \(A\) as
follows. By choosing branch cuts, pick a fundamental domain
\(B\subset\tilde{A}\) for the action of the deck group. Let
\(I\colon\tilde{A}\to\RR^2\) be the developing map for the integral
affine structure. The developing map does not descend to \(A\), but
its restriction to \(B\) can be considered as a ``branch'' of the
developing map on \(A\). The branch cuts form part of the boundary
of \(I(B)\subset\RR^2\); the branch cuts are identified by the
action of specified (integral affine) deck transformations of
\(\tilde{A}\), so to reconstruct \(A\) all we need is the immersed
polygon \(I(B)\) together with a collection of integral affine
transformations (``affine monodromies'') which identify pairs of
boundary components of \(I(B)\). This is precisely the data of an
almost toric base diagram.

We often (but not always) pick these branch cuts to point along the
eigenvectors of the deck transformations which pair them; if we do
this then any two branch cuts which are identified have the property
that their images under the developing map coincide, so that
\(I(B)\) is actually homeomorphic to \(A\). However, the branch cut
data is still important for reconstructing the integral affine
structure on \(A\): when you cross a branch cut, vectors normal to
the cut will still be affected by the affine monodromy. If branch
cuts are chosen along eigenlines of the monodromy then we say the
diagram is {\em eigensliced}.

Mutation is the operation on eigensliced almost toric base diagrams
which corresponds to changing branch cuts by 180 degrees. This
produces another eigensliced almost toric base diagram. Mutation
changes the branch cuts, and thereby affects the almost toric base
diagram, but does not affect the underlying integral affine manifold
\(A\). In particular, it does not affect the symplectomorphism type
of \(X\), which depends only on the integral affine structure of
\(A\). \qedhere

\end{proof}
\begin{Remark}
There is another closely related operation on almost toric base
diagrams which often accompanies a mutation, namely a {\em nodal
slide}. This is when a focus-focus point moves in the direction of
the eigenvector of its affine monodromy. This also leaves the
symplectomorphism type of \(X\) unaffected {\cite[Proposition
6.2]{Symington}}, but it does change the Lagrangian torus fibration
(whereas a mutation only changes the picture we draw to represent
the torus fibration).

\end{Remark}
\subsection{Mutability}

Mutation of polygons always makes sense, but it is possible that the
mutation of a truncated wedge is no longer a truncated wedge. We
therefore make the following definitions:

\begin{Definition}
Given the polygon $\Pi=\Pi(p_1,q_1,p_2,q_2,c,a)$ and the almost
toric structure with branch cuts $B_1$ and $B_2$, let $\bar{B}_i$
denote the semi-infinite ray through $x_i$ extending $B_i$. We say
that $\Pi$ is:
\begin{itemize}
\item {\em right-mutable} if $\bar{B}_1$ intersects the edge $R_2$,
\item {\em right-borderline} if $\bar{B}_1$ is parallel to $R_2$,
\item {\em right-immutable} otherwise.
\end{itemize}
Left-mutability is defined similarly. The {\em right mutation}
\end{Definition}
$\rmut(\Pi)$ is the mutation of $\Pi$ along $B_1$. For notational
convenience, we will focus entirely on right rather than left
mutation in what follows; indeed, one can reflect one's polygon in a
vertical line and always work with right mutation.

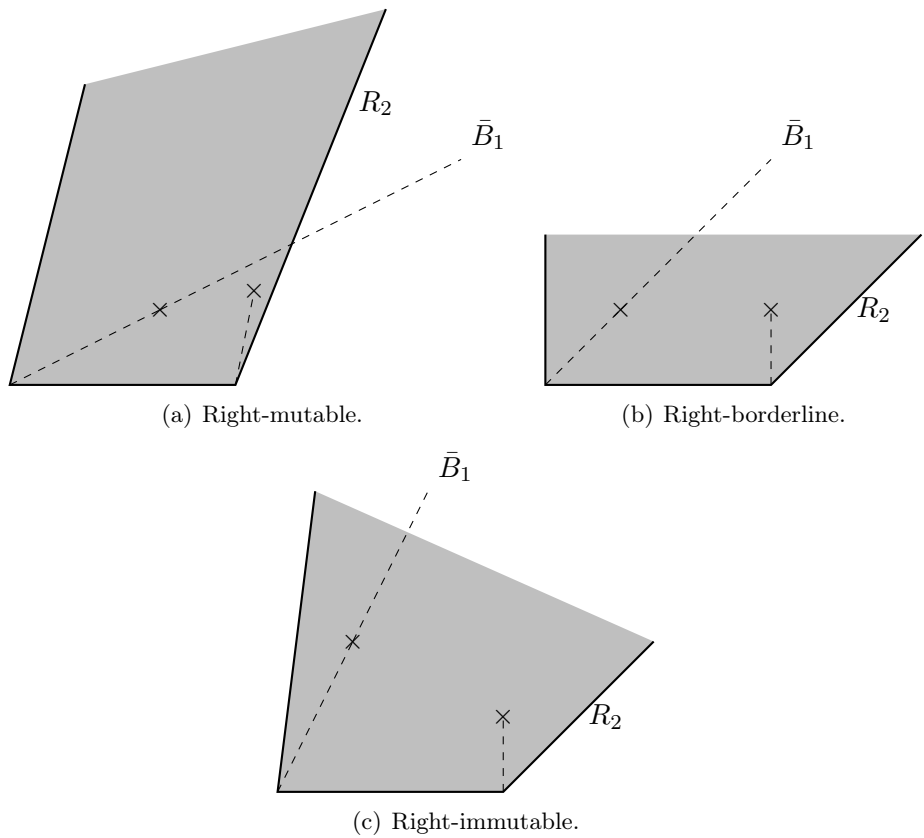
\begin{figure}
\centering
\subfigure[Right-mutable.]{
\begin{tikzpicture}[scale=1]
\filldraw[draw=black,thick,fill=gray!50] (1,4) -- (0,0) -- (3,0) -- (5,5);
\draw[dashed] (0,0) -- (2,1) node {$\times$};
\draw[dashed] (2,1) -- (6,3) node [above right] {$\bar{B}_1$};
\draw[dashed] (3,0) -- (3.25,1.25) node {$\times$};
\node at (4.5,3.75) [right] {$R_2$};
\end{tikzpicture}
}
\subfigure[Right-borderline.]{
\begin{tikzpicture}[scale=1]
\filldraw[draw=black,thick,fill=gray!50] (0,2) -- (0,0) -- (3,0) -- (5,2);
\draw[dashed] (0,0) -- (1,1) node {$\times$};
\draw[dashed] (1,1) -- (3,3) node [above right] {$\bar{B}_1$};
\draw[dashed] (3,0) -- (3,1) node {$\times$};
\node at (4,1) [right] {$R_2$};
\end{tikzpicture}
}
\subfigure[Right-immutable.]{
\begin{tikzpicture}[scale=1]
\filldraw[draw=black,thick,fill=gray!50] (0.5,4) -- (0,0) -- (3,0) -- (5,2);
\draw[dashed] (0,0) -- (1,2) node {$\times$};
\draw[dashed] (1,2) -- (2,4) node [above right] {$\bar{B}_1$};
\draw[dashed] (3,0) -- (3,1) node {$\times$};
\node at (4,1) [right] {$R_2$};
\end{tikzpicture}
}
\caption{Mutability of truncated wedges.}\label{fig:mutability}
\end{figure}

For our model polygon $\Pi(p_1,q_1,p_2,q_2,c,a)$, the affine monodromy
of $B_1$ (with its coorientation pointing to the left) is
\[A=\matr{1+p_1q_1-p_1^2}{(p_1-q_1)^2}{-p_1^2}{1-p_1q_1+p_1^2},\] and
the affine monodromy of $B_2$ (with its coorientation pointing to the
right) is
$\matr{cp_2^2-p_2q_2+1}{-(cp_2-q_2)^2}{p_2^2}{1+p_2q_2-cp_2^2}$.

Note that, for a right mutation, $AE_\Pi$ points in the (negative)
\(R_1\)-direction so $R_1\cup AE_\Pi$ is now a single edge of
$\rmut(\Pi)$. Indeed, $A$ is determined by this condition and the
condition that it has $B_1$ as an eigenray.

\begin{Lemma}
A polygon $\Pi=\Pi(p_1,q_1,p_2,q_2,c,a)$ is right-mutable if and
only if $c\leq 1$ and $\delta p_2-p_1>0$, where
$\delta=-\sigma(\Delta)$. As a consequence, mutability implies
$\sigma(\Pi)<0$. For left-mutability, we use the inequality $\delta
p_1-p_2>0$ instead.
\end{Lemma}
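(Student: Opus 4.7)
The proof is a direct intersection computation.

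First, parametrise $\bar{B}_1 = \{s(p_1-q_1, p_1) : s \geq 0\}$ and $R_2 = \{(a,0) + t(cp_2^2 - p_2 q_2 + 1,\, p_2^2) : t \geq 0\}$. Matching $y$-coordinates forces $sp_1 = tp_2^2$; substituting into the $x$-coordinate equation and multiplying through by $p_1$ yields
\[
t\bigl[p_2^2(p_1 - q_1) - p_1(cp_2^2 - p_2 q_2 + 1)\bigr] = p_1 a.
\]
Expanding and regrouping, the bracketed expression is precisely $\delta p_2 - p_1$, where
\[
\delta = (1-c)p_1 p_2 - p_2 q_1 + p_1 q_2 = -\sigma(\Pi)
\]
by Lemma \ref{lma:trunc}. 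Since $p_1, a > 0$, the rays meet at positive parameter values (and the intersection lies inside the polygon, as $\bar{B}_1$ enters through the vertex $x_1$ between $E_\Pi$ and $R_1$) if and only if $\delta p_2 - p_1 > 0$; the parallel (borderline) situation corresponds to $\delta p_2 - p_1 = 0$, and the remaining case $\delta p_2 - p_1 < 0$ gives an intersection with negative $t$, i.e.\ below the $x$-axis, so $\bar{B}_1$ escapes to infinity inside $\Pi$.

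Next, I verify that $\delta p_2 - p_1 > 0$ forces $c \leq 1$. Using $q_1 \geq 0$ and $q_2 \leq p_2$, one bounds $-p_2 q_1 + p_1 q_2 \leq p_1 p_2$, so $\delta \leq (2-c) p_1 p_2$. If $c \geq 2$ this gives $\delta \leq 0$, contradicting $\delta p_2 > p_1 > 0$. Hence $c \leq 1$. The consequence $\sigma(\Pi) < 0$ is immediate: from $\delta > p_1/p_2 \geq 0$ and $\delta \in \ZZ$ we get $\delta \geq 1$, i.e.\ $\sigma(\Pi) \leq -1$. The left-mutability case is obtained by exchanging the roles of $x_1$ and $x_2$ (equivalently, reflecting $\Pi$ in a vertical line), and the identical computation produces the condition $\delta p_1 - p_2 > 0$.

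The only delicate step is the algebraic identification of the bracket with $\delta p_2 - p_1$; once the identity derived from the definition of $\sigma(\Pi)$ is in hand, the rest is routine sign analysis.
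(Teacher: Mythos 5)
Your proof is correct and takes essentially the same approach as the paper: both amount to locating the intersection of $\bar{B}_1$ with $R_2$. The only structural difference is organisational. The paper runs a case analysis on $c$: it dismisses $c\leq 0$ on the grounds that $\Delta(\Pi)<0$ (so $\Pi$ is not a truncated wedge), shows for $c\geq 2$ that the slopes of $\bar{B}_1$ and $R_2$ straddle $1$ so the rays never meet, and then for $c=1$ compares slopes to extract $p_2(p_1q_2-p_2q_1)-p_1>0$. You instead solve for the intersection parameter $t$ uniformly in $c$, obtain $t(\delta p_2-p_1)=p_1a$, and then \emph{derive} $c\leq 1$ from $\delta p_2-p_1>0$ via the estimate $\delta\leq(2-c)p_1p_2$. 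This is slightly cleaner in that it exhibits the $c\leq 1$ clause of the statement as redundant rather than having to be checked separately, and avoids invoking $\Delta(\Pi)<0$ for $c\leq 0$. One minor imprecision: when $\delta p_2-p_1<0$ you describe $\bar{B}_1$ as ``escaping to infinity inside $\Pi$''; to be airtight you should also note (as you implicitly do for the positive case) that $\bar{B}_1$ stays on the $\Pi$-side of $R_1$, i.e.\ that the slope $p_1/(p_1-q_1)$ of $\bar{B}_1$ is strictly less than the slope $p_1^2/(p_1(p_1-q_1)-1)$ of $R_1$, so the ray genuinely remains in the wedge and never exits through $R_1$ or $E_\Pi$.
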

\begin{proof}
Note that if $c\leq 0$ then the invariant $\Delta(\Pi)$ from Lemma
\ref{lma:trunc} is negative, so we do not consider this case.

If $c\geq 2$ then the slope $\frac{p_2^2}{cp_2^2-p_2q_2+1}$ of $R_2$
is less than or equal to $1$. The slope $\frac{p_1}{p_1-q_1}$ (or
$1$ if $p_1=q_1=1$) of $\bar{B}_1$ is greater than or equal to $1$,
so these lines never intersect and the polygon is not right-mutable.

If $c=1$ then we have right-mutability if and only if the slope of
$\bar{B}_1$ is strictly less than the slope of $R_2$:
\[\frac{p_2^2}{p_2^2-p_2q_2+1}>\frac{p_1}{p_1-q_1}.\] This gives
\[p_2(p_1q_2-p_2q_1)-p_1>0,\] which is again equivalent to
$\delta p_2-p_1>0$. In particular, we see that this can only
hold if $\sigma(\Pi)$ is negative.

The criterion for left-mutability is proved similarly. \qedhere

\end{proof}
\begin{Remark}
Mutability also makes sense when $\Delta(\Pi)<0$, and we always get
mutability in both directions. However, these polygons are not
truncated wedges, so we ignore them.

\end{Remark}
\subsection{Effect of mutations}

The polygon $\rmut(\Pi)$ has vertices at $x'_1=Ax_2$ and at $x'_2$,
the point of intersection between $\bar{B}_1$ and $R_2$. Since the
type of a vertex is invariant under \(\ZZ\)-affine transformations, we
see that $x'_1$ has type $\frac{1}{p_2^2}(1,p_2q_2-1)$.

\begin{Remark}
Remembering our convention that a smooth point has $(p_1,q_1)=(1,0)$
or $(p_2,q_2)=(1,1)$ if it occurs on the left or on the right
respectively, one sees that this should switch under a mutation;
however, if $(p_2,q_2)=(1,1)$ then the polygon is not right-mutable,
so it is never an issue.

\end{Remark}
To identify the type of vertex $x'_2$ we need a recognition lemma:

\begin{Lemma}\label{lma:recog}
Suppose we have an edge $R$ of a polygon and a branch cut $B$
disjoint from $R$ whose semi-infinite extension $\bar{B}$ intersects
$R$. Make a \(\ZZ\)-affine transformation $M$ to put $R$ in the vertical
direction with the polygon on its right. If $-MB$ points in the
direction $(p,q+kp)$ with $0<q<p$ then the result of mutation along
$B$ will have a vertex of type $\frac{1}{p^2}(1,pq-1)$ at the point
of intersection between $B$ and $R$.
\end{Lemma}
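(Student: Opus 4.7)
The plan is to perform the mutation explicitly in the standardised coordinates provided by $M$ and then read off the affine type of the new vertex. After applying $M$, the edge $R$ lies along the $y$-axis with the polygon contained in $\{x\ge 0\}$; by hypothesis $\bar{B}$ has primitive direction $(p,q+kp)$ (primitivity follows from $\gcd(p,q)=1$) and meets $R$ at the point $P$, which is the candidate vertex.

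First I would write down the monodromy $A\in SL(2,\ZZ)$ across $B$. Since $A$ has $(p,q+kp)$ as an eigenvector of eigenvalue $1$ and the coorientation of $B$ is fixed (up to sign) by the convention in the hypothesis $-MB=(p,q+kp)$, the matrix is
\[A=\matr{1+p(q+kp)}{-p^2}{(q+kp)^2}{1-p(q+kp)}.\]
Next I would perform the mutation. The edge $R$ is split at $P$ into a piece with tangent $(0,1)$ (the part kept unchanged by mutation) and a piece with tangent $(0,-1)$ (the part transformed by $A$). A direct computation yields $A\cdot(0,-1)=(p^2,\,pq+kp^2-1)$, so in the mutated polygon the two primitive outgoing edge vectors at $P$ are $(0,1)$ and $(p^2,\,pq+kp^2-1)$.

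Finally, I would identify the vertex type. Applying the \(\ZZ\)-affine shear $\matr{1}{0}{-k}{1}$ fixes $(0,1)$ and sends $(p^2,pq+kp^2-1)$ to $(p^2,pq-1)$. The hypothesis $0<q<p$ gives $0<pq-1<p^2$ and $\gcd(p^2,pq-1)=1$ (since $\gcd(p,pq-1)=1$), so a neighbourhood of $P$ in the mutated polygon is \(\ZZ\)-affine isomorphic to a neighbourhood of the origin in $\pi(p^2,pq-1)$, which is the standard model for the Wahl vertex of type $\frac{1}{p^2}(1,pq-1)$.

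The main thing requiring care is the sign of $A$: replacing $A$ by $A^{-1}$ swaps the roles of $\Pi_{upper}$ and $\Pi_{lower}$ and would produce a vertex of type $\frac{1}{p^2}(1,p(p-q)-1)$ (i.e.\ with $q$ replaced by its ``reverse'' $p-q$) instead. The convention $-MB=(p,q+kp)$, combined with ``polygon on the right of $R$'', pins down the correct side; modulo this bookkeeping, the result is a direct $SL(2,\ZZ)$ calculation.
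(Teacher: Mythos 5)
Your proof is correct, but it takes a different route from the paper. The paper's argument is an ``inversion'' argument: it observes that $\pi(p^2,pq-1)$ with branch cut in the $(p,q)$-direction mutates to a half-space, shears to get the local models appearing in the lemma, and then concludes that mutating back necessarily recovers $\pi(p^2,pq-1)$ (up to shear), with the sign in $-MB$ accounting for the reversal of the branch cut under mutation. You instead perform the forward computation directly: write down the monodromy matrix $A$ explicitly, apply it to the appropriate half of $R$, and shear. The two approaches are logically equivalent -- yours is more explicit, the paper's avoids the matrix arithmetic entirely and, crucially, sidesteps the bookkeeping of which half of $R$ is transformed, since it never needs to determine $\Pi_{upper}$ versus $\Pi_{lower}$.

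That bookkeeping is the one place where your write-up is thinner than it should be. You assert that the convention $-MB=(p,q+kp)$ together with ``polygon on the right'' pins down which half of $R$ gets hit by $A$, but you don't verify it. It does check out: with the branch cut pointing from the far vertex toward $z$ (so in direction $MB=(-p,-(q+kp))$), the paper's coorientation convention gives $(-(q+kp),p)$, which at $P$ points toward the $(0,1)$ half of $R$; hence $\Pi_{upper}$ is the $(0,1)$ side and $\Pi_{lower}$ is the $(0,-1)$ side, so it is indeed the $(0,-1)$ piece that is transformed by $A$, as you claimed. Since you flagged that the wrong choice would produce $\frac{1}{p^2}(1,p(p-q)-1)$ rather than $\frac{1}{p^2}(1,pq-1)$, this is exactly the computation that needs to be included to make the argument complete; as written it is a gap, though a small and fillable one.
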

\begin{proof}
The polygon $\pi(p^2,pq-1)$ equipped with a branch cut starting at
the origin and pointing in the $(p,q)$ direction can be mutated to
get the right half-space with a branch cut pointing out to infinity
in the \((p,q)\)-direction. Shearing this using matrices
$\matr{1}{0}{k}{1}$ gives the local models in the lemma, which will
then necessarily give (a shear of) the original polygon
$\pi(p^2,pq-1)$ upon mutation. (The sign in $-MB$ is because we
reverse the direction of the branch cut when we mutate). \qedhere

\end{proof}
\begin{Lemma}\label{lma:muteffect}
Let $\rmut(\Pi)$ be the right mutation of
$\Pi(p_1,q_1,p_2,q_2,1,a)$. Define:
\begin{gather}
\delta:=-\sigma(\Pi)=p_2q_1-p_1q_2,\nonumber\\
p_3:=\delta p_2-p_1,\qquad q_3:=\delta q_2-q_1.\nonumber
\end{gather}
Then:
\begin{itemize}
\item the affine length of $E_{\rmut(\Pi)}$ is $p_1a/p_3$;
\item the vertex $x'_2$ has type $\frac{1}{p_3^2}(1,p_3q_3-1)$ where
\item $\sigma(\rmut(\Pi))=\sigma(\Pi)=-\delta$.
\end{itemize}
\end{Lemma}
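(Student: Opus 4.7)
The strategy is to verify each of the three claims by direct calculation, after first locating the new vertex $x'_2$ as the intersection point of the extended branch cut $\bar{B}_1$ and the noncompact ray $R_2$.

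Parameterize $\bar{B}_1$ as $t(p_1-q_1, p_1)$ from the origin and $R_2$ as $(a,0) + s(p_2^2 - p_2 q_2 + 1, p_2^2)$, equate coordinates, and eliminate. Using $\delta = -\sigma(\Pi)$, a short computation gives the $R_2$-parameter $s^* = a p_1 / p_3$, where $p_3 = \delta p_2 - p_1$; positivity of $s^*$ is precisely the right-mutability hypothesis. The new compact edge is $E_{\rmut(\Pi)} = A(R_2 \cap \Pi_{lower})$, the image under the monodromy $A$ of the $R_2$-segment from $(a,0)$ to $x'_2$. Since $A \in SL(2,\ZZ)$ preserves primitive integer vectors and hence affine length, this segment and its image have the same affine length, giving the first claim.

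For the type at $x'_2$, apply Lemma \ref{lma:recog}: choose $M \in SL(2,\ZZ)$ sending the primitive direction of $R_2$ to $(0,-1)$ with the polygon on the right, explicitly with top row $(-p_2^2,\, p_2^2 - p_2 q_2 + 1)$. A direct calculation gives $(-MB_1)_x = p_2 \delta - p_1 = p_3$. The $y$-coordinate is determined only mod $p_3$, reflecting the freedom in choosing the bottom row of $M$. A convenient choice is $c = -(p_2 q_2 + 1)$, with $d$ then fixed by the determinant equation; substitution yields $(-MB_1)_y \equiv \delta q_2 - q_1 = q_3 \pmod{p_3}$. Since $0 < q_3 < p_3$ in the right-mutable regime, Lemma \ref{lma:recog} identifies the singularity at $x'_2$ as type $\frac{1}{p_3^2}(1, p_3 q_3 - 1)$.

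For the invariance of $\sigma$, the key observation is that mutation alters only the compact part of the polygon: the noncompact rays $R_1$ and $R_2$, viewed as unbounded rays in $\RR^2$, are unchanged both in position and direction. By Lemma \ref{lma:trunc} this forces $\Delta(\rmut(\Pi)) = \Delta(\Pi) = p_1^2 + p_2^2 - p_1 p_2 \delta$. Applying the formula $\Delta = p^2 + (p')^2 + \sigma p p'$ to the new vertex types $(p_2, p_3)$, together with the expansion $p_3^2 = \delta^2 p_2^2 - 2\delta p_1 p_2 + p_1^2$, a short algebraic simplification delivers $\sigma(\rmut(\Pi)) = -\delta$, as required.

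The main obstacle is the identification of $q_3$ in the application of Lemma \ref{lma:recog}: one must make a judicious choice of the bottom row of $M$ and then carefully unpack several substitutions before $\delta q_2 - q_1$ emerges as the relevant $y$-coordinate. The remaining steps (locating $x'_2$, tracking the affine length through $A$, and computing $\sigma$ from an unchanged $\Delta$) are then routine.
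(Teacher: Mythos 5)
Your proof is correct, and for the first two bullets it is essentially the paper's argument: you parametrize $\bar{B}_1$ and $R_2$ the same way, obtain the same intersection parameter $s^* = ap_1/p_3$, and apply Lemma~\ref{lma:recog} with the same matrix $M$ to read off the type of $x'_2$. The affine-length claim follows because $A\in SL(2,\ZZ)$ preserves affine length, which is exactly the paper's implicit reasoning. One small caveat on your remark that the $y$-coordinate of $-MB_1$ is ``determined only mod $p_3$'': with the specific $M$ you (and the paper) choose, the computation gives exactly $q_3=\delta q_2-q_1$; and in any case $\frac{1}{p^2}(1,pq-1)$ depends on $q$ only modulo $p$, so the ambiguity is harmless and the worry about choosing a representative in $(0,p_3)$ is not needed.

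For the third bullet you take a genuinely different route. The paper verifies $\sigma(\rmut(\Pi)) = p_3q_2 - p_2q_3 = p_2q_1 - p_1q_2 = \sigma(\Pi)$ by direct substitution of $(p_3,q_3)=(\delta p_2-p_1,\delta q_2-q_1)$, implicitly using that the shear invariant of $\rmut(\Pi)$ is again $1$. You instead argue that $\Delta(\rmut(\Pi))=\Delta(\Pi)$ and solve $p_2^2+p_3^2+\sigma' p_2p_3=\Delta$ for $\sigma'$. This is cleaner in one respect: since $\Delta$-invariance pins down $\sigma'$ without presupposing the new shear invariant, it actually lets you \emph{deduce} $c'=1$ (from $\sigma'=(c'-1)p_2p_3+p_3q_2-p_2q_3=(c'-1)p_2p_3-\delta=-\delta$) rather than assuming it. The one place your argument should be tightened is the justification of $\Delta(\rmut(\Pi))=\Delta(\Pi)$: it is not literally true that the rays $R_1,R_2$ are ``unchanged in position''---their initial points move (the new rays emanate from $Ax_2$ and $x'_2$ respectively)---but the two rays of $\rmut(\Pi)$ lie on the same lines and point in the same directions as those of $\Pi$, since both new rays are contained in $\Pi_{upper}$, which is fixed by the mutation. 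As the proof of Lemma~\ref{lma:trunc} shows, $\Delta$ and $\Omega$ depend only on the directions of the two noncompact edges, so this is enough. Finally, note that the statement of the lemma has a sign typo ($\delta=-\sigma(\Pi)=p_1q_2-p_2q_1$, not $p_2q_1-p_1q_2$); your calculations consistently use the correct sign.
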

\begin{proof}
To find $x'_2$, we parametrise $B_1$ as
$(\tau_1(p_1-q_1),\tau_1p_1)$ (or $(\tau_1,\tau_1)$ if $p_1=q_1=1$)
and $R_2$ as $(a+\tau_2(p_2(p_2-q_2)+1),\tau_2p_2^2)$ and we see
this intersection occurs when
\begin{align*}
\tau_1&=\frac{p_2^2\tau_2}{p_1}\\
\tau_2&=\frac{p_1a}{\delta p_2-p_1},
\end{align*}
where $\delta=p_1q_2-p_2q_1$ (since $c=1$). After mutation, a
fraction $\tau_2$ of the affine length of $R_2$ becomes the edge
$E_{\rmut(\Pi)}$, so the affine length of this edge in the new
polygon is $\tau_2=ap_1/(\delta p_2-p_1)$.

To see what kind of vertex we get at $x'_2$ after a left mutation,
we can use the affine transformation
$M:=\matr{-p_2^2}{p_2(p_2-q_2)+1}{-p_2q_2-1}{q_2(p_2-q_2)+1}$ to put
the ray $R_2$ in the direction $(0,-1)$; this makes $R_2$ vertical
and puts $\Pi$ to the right of $R_2$ so we may apply Lemma
\ref{lma:recog} and compute $-MB_1=(p_3,q_3)$ to get the type
$\frac{1}{p_3^3}(1,p_3q_3-1)$ of $x'_2$. Since $B_1$ points in the
direction $(p_1-q_1,p_1)$, $-MB_1$ points in the direction
$(p_3,q_3)$ where $p_3=\delta p_2-p_1$ and $q_3=\delta q_2-q_1$.

For the final part of the lemma, $\sigma(\Pi)=p_2q_1-p_1q_2$ and
\begin{align*}
\sigma(\rmut(\Pi))&=p_3q_2-p_2q_3\\
&=(\delta p_2-p_1)q_2-p_2(\delta q_2-q_1)\\
&=p_2q_1-p_1q_2.
\end{align*}

\end{proof}
\subsection{Mori sequences}
\label{sct:mori}

\subsubsection{Definition of Mori sequences}

\begin{Definition}
Let $(p_1,q_1)$ and $(p_2,q_2)$ be pairs of positive integers with
$\gcd(p_i,q_i)=1$, $q_i\leq p_i$. Using the notation
\[[b_1,\ldots,b_r]=b_1-\frac{1}{b_2-\frac{1}{\cdots-\frac{1}{b_r}}}\]
for continued fractions, let
\[\frac{p_i^2}{p_iq_i-1}=[b_{i,1},\ldots,b_{i,r_i}],\] and suppose
that $\Delta=p_1^2+p_2^2+\sigma p_1p_2>0$ and that the rational
number
\[\frac{\Delta}{\Omega}:=[b_{1,1},\ldots,b_{1,r_1},1,b_{2,1},\ldots,b_{2,r_2}]\]
is well-defined (no division by zero). Let \[\delta=p_1q_2-p_2q_1\]
and suppose that $\delta>0$. The {\em Mori sequence}
$M(p_1,q_1;p_2,q_2)$ is the sequence of pairs $(p_i,q_i)$ extending
$(p_1,q_1),(p_2,q_2)$ and satisfying the recursions
\begin{align*}
p_{i+2}&=\delta p_{i+1}-p_i,\\
q_{i+2}&=\delta q_{i+1}-q_i.
\end{align*}

\end{Definition}
\subsubsection{Behaviour of Mori sequences}
\label{sct:mori}

If $(p_i,q_i)$ is a Mori sequence then we can recast the recursion
relation for the $p_i$ as a matrix equation
\[\vect{p_{i+1}}{p_{i+2}}=\matr{0}{1}{-1}{\delta}\vect{p_i}{p_{i+1}}.\]
This matrix $M=\matr{0}{1}{-1}{\delta}$ has eigenvalues
$\lambda_{\pm}=\frac{\delta\pm\sqrt{\delta^2-4}}{2}$; if $\delta\geq
2$ then these are real and satisfy $\lambda_-\lambda_+=1$. In this
case, there are eigenrays with slopes $\lambda_{\pm}$.

Repeated application of $M$ defines a discrete dynamical system on the
plane, and the behaviour of $(p_{i+1},p_{i+2})=M^i(p_1,p_2)$ under
repeated application of $M$ is indicated by the arrows in the
figure. This behaviour separates into three distinct regions,
separated by the eigenrays:
\begin{itemize}
\item In the region $p_2>\lambda_+ p_1$, the Mori sequence is increasing
and the ratio $p_{i+1}/p_i$ tends to $\lambda_+$ from above.
\item In the region $p_2<\lambda_- p_1$, the Mori sequence is decreasing
and terminates when $M^i(p_1,p_2)$ leaves the positive quadrant.
\item In the region between the two eigenrays, the Mori sequence
decreases, reaches a minimum, then increases again. It does not
terminate in either direction.
\end{itemize}
Note that $(p_1,p_2)$ lives in the region between the eigenrays if and
only if $\Delta(\Pi)=p_1^2+p_2^2+\delta p_1p_2$ is negative. Recall
from Lemma \ref{lma:trunc} and Remark \ref{rmk:deltapos} that
$\Delta(\Pi)>0$ for all truncated wedges, so we find ourselves
automatically in the situation where our Mori sequence is increasing
or decreasing (if $p_1<p_2$ or $p_2<p_1$ respectively).

\begin{center}
\begin{tikzpicture}
\draw (0,5) -- (0,0) -- (5,0);
\draw[dashed] (0,0) -- (60:7) node [pos=0.85,sloped,above] {slope $\lambda_+$};
\draw[dashed] (0,0) -- (30:7) node [pos=0.85,sloped,below] {slope $\lambda_-$};
\draw[thick,->] (0,2) to[out=0,in=-120] (61:5);
\draw[thick,->] (0,3) to[out=0,in=-120] (62:5);
\draw[thick,->] (29:5) to[out=-150,in=90] (2,0);
\draw[thick,->] (28:5) to[out=-150,in=90] (3,0);
\draw[thick,->] (31:5) to[out=-150,in=-45] (2,2) to[out=135,in=-120] (59:5);
\node at (0,5) [left] {$p_2$};
\node at (5,0) [below] {$p_1$};

\end{tikzpicture}
\end{center}
\subsection{Infinite mutability}
\label{sct:infinitemut}

\begin{Definition}
We say that a \(K\)-negative polygon $\Pi$ is {\em infinitely
right-mutable} if $\rmut^j(\Pi)$ is right-mutable for
$j=0,1,\ldots$. From the previous subsection, this is equivalent to
$\delta\geq 2$, $p_1\leq p_2$.

\end{Definition}
If $\Pi$ is infinitely right-mutable, then, by Lemma
\ref{lma:muteffect}, we obtain a sequence of mutations
$\Pi(p_i,q_i,p_{i+1},q_{i+1},1,a_i)$ where $(p_i,q_i)$ is a Mori
sequence $M(p_1,q_1;p_2,q_2)$ ($\delta=-\sigma(\Pi)$).

By construction, the symplectic manifold $U_{\rmut^{j-1}(\Pi)}$
contains Lagrangian pinwheels $L_{p_j,q_j}$ and $L_{p_{j+1},q_{j+1}}$
as visible surfaces in its almost toric fibration, see Section
\ref{sct:almosttoric}. The manifolds $U_{\rmut^{j-1}(\Pi)}$ and
$U_\Pi$ are symplectomorphic by Remark
\ref{rmk:mutationdoesntchangeanything}, since their almost toric
structures are related by mutations. We summarise this in the
following corollary.

\begin{Corollary}
Let $\Pi=\Pi(p_1,q_1,p_2,q_2,1,a)$ be an infinitely mutable
polygon. The symplectic manifold $U_\Pi$ contains Lagrangian
pinwheels $L_{p_i,q_i}$ where $(p_i,q_i)$ is the Mori sequence
$M(p_1,q_1;p_2,q_2)$ with $\delta=-\sigma(\Pi)=p_1q_2-p_2q_1$.

\end{Corollary}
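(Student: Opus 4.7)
The plan is to iterate Lemma~\ref{lma:muteffect} and combine it with Lemma~\ref{lma:mutationdoesntchangeanything}. First I would show by induction on $j$ that $\rmut^{j-1}(\Pi)$ is again of the form $\Pi(p_j,q_j,p_{j+1},q_{j+1},1,a_j)$, where $(p_i,q_i)$ obeys the Mori recursion with $\delta=-\sigma(\Pi)=p_1q_2-p_2q_1$. Lemma~\ref{lma:muteffect} immediately delivers the vertex-type transformation $(p_j,q_j;p_{j+1},q_{j+1})\mapsto(p_{j+1},q_{j+1};p_{j+2},q_{j+2})$ with $p_{j+2}=\delta p_{j+1}-p_j$ and $q_{j+2}=\delta q_{j+1}-q_j$, and also gives the new affine length $a_{j+1}=p_ja_j/p_{j+2}$. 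The only point to verify is that the shear invariant of $\rmut^{j-1}(\Pi)$ is again equal to $1$: since Lemma~\ref{lma:muteffect} records $\sigma(\rmut(\Pi))=\sigma(\Pi)=-\delta$, and since one computes directly from the Mori recursion that $p_{j+2}q_{j+1}-p_{j+1}q_{j+2}=-\delta$, the defining formula $\sigma=(c-1)p_{j+1}p_{j+2}+p_{j+2}q_{j+1}-p_{j+1}q_{j+2}$ forces $c=1$.

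Next I would check that each mutation in the sequence is legal. By hypothesis $\Pi$ is infinitely right-mutable, which by Section~\ref{sct:infinitemut} is equivalent to $\delta\geq 2$ and $p_1\leq p_2$. The analysis of the discrete dynamical system in Section~\ref{sct:mori} then shows that the sequence $(p_i)$ is strictly increasing (in particular stays in the region $p_{j+1}>\lambda_-p_j$) and $\delta$ is preserved; together with the preservation of $\sigma=-\delta$ and $c=1$, this confirms that right-mutability is preserved at every step.

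Having established the infinite chain of polygons, I would invoke Section~\ref{sct:almosttoric}: the almost toric structure on each $\rmut^{j-1}(\Pi)$ equips the manifold $U_{\rmut^{j-1}(\Pi)}$ with visible Lagrangian pinwheels $L_{p_j,q_j}$ and $L_{p_{j+1},q_{j+1}}$ as the two visible surfaces living over the branch cuts at the two vertices. Lemma~\ref{lma:mutationdoesntchangeanything} then gives a symplectomorphism $U_{\rmut^{j-1}(\Pi)}\cong U_\Pi$ for every $j$, allowing us to transport each $L_{p_j,q_j}$ into $U_\Pi$. Combining the pinwheels from all $j$ yields the desired infinite family $\{L_{p_i,q_i}\}\subset U_\Pi$.

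The only mildly subtle step is the inductive constancy of $c=1$, which is not stated explicitly in Lemma~\ref{lma:muteffect} but follows from the stated value of $\sigma$ by a one-line calculation; everything else is a direct assembly of results already in the excerpt. I do not anticipate a serious obstacle, since the hard geometric content (that mutation preserves the underlying symplectic manifold and that the Mori recursion exactly matches the mutation recursion on vertex data) has been isolated in the preceding lemmas.
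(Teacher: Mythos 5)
Your proof is correct and follows essentially the same route as the paper, which spells the argument out in the two sentences immediately preceding the corollary: iterate Lemma~\ref{lma:muteffect} to produce the chain of polygons $\Pi(p_j,q_j,p_{j+1},q_{j+1},1,a_j)$ with Mori-recursive vertex data, read off the visible pinwheels $L_{p_j,q_j}$, $L_{p_{j+1},q_{j+1}}$ from each almost toric base diagram, and transport them all into $U_\Pi$ via the symplectomorphisms supplied by Lemma~\ref{lma:mutationdoesntchangeanything}. On the step you flag as the one subtle point: there is a more immediate reason that $c=1$ persists, namely that right-mutability requires $c\leq 1$ (the mutability criterion) while being a truncated wedge forces $c\geq 1$ (otherwise $\Delta(\Pi)\leq 0$), and the infinite-right-mutability hypothesis supplies the first inequality at every stage. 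Your route via the $\sigma$-formula is a legitimate deduction from Lemma~\ref{lma:muteffect} as stated, but note that the paper's computation of $\sigma(\rmut(\Pi))=p_3q_2-p_2q_3$ in the proof of that lemma already tacitly invokes $c=1$, so the cleaner logical order is to pin down $c=1$ first and then confirm the value of $\sigma$. A last trivial slip: the eigenray bounding the increasing region has slope $\lambda_+$, not $\lambda_-$, though this is immaterial here because $\Delta>0$ excludes the wedge between the two eigenrays.
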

In practice, we are looking for these Mori sequences of pinwheels in
{\em compact} symplectic manifolds, so it is important that we can
perform the sequence of mutations in a compact subdomain of
$U_\Pi$. To that end, we introduce some new notation:

\begin{Definition}\label{dfn:bddpolygon}
Given a truncated wedge $\Pi=\Pi(p_1,q_1,p_2,q_2,c,a)$ and two
positive real numbers $\ell_1,\ell_2$, let $y_i\in R_i$ be the
unique point on the ray \(R_i\) at a distance $\ell_i$ from $x_i$,
for $i=1,2$. Define $\Pi_{\ell_1,\ell_2}$ to be the convex hull of
$x_1,x_2,y_1,y_2$. Let $V_{\Pi}(\ell_1,\ell_2)$ (respectively
$U_{\Pi}(\ell_1,\ell_2)$) be the preimage of $\Pi_{\ell_1,\ell_2}$
under the moment map (respectively almost toric fibration).

\end{Definition}
The manifold $U_{\Pi}(\ell_1,\ell_2)$ is a compact symplectic manifold
whose boundary is a lens space $L(\Delta,\Omega)$ of contact-type. The
diffeomorphism type is independent of the parameters
$a,\ell_1,\ell_2$, but these are important for the symplectic
structure.

\begin{Lemma}\label{lma:sufficientlysmall}
Let $\Pi^-=\Pi(p_1,q_1,p_2,q_2,1,a^-)$ be an infinitely
right-mutable \(K\)-negative polygon and let $\ell_1,\ell_2$ be
positive real numbers. If $\ell_2>\frac{a^-}{\lambda_+^{2}-1}$ then
the right mutations of $\Pi^-$ may be performed inside the
subpolygon $\Pi^-_{\ell_1,\ell_2}$. As a consequence,
$U_{\Pi^-}(\ell_1,\ell_2)$ contains an infinite Mori sequence
$M(p_1,q_1;p_2,q_2)$ of Lagrangian pinwheels.
\end{Lemma}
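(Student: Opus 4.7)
The plan is to track how much affine length along the ray $R_2$ of $\Pi^-$ each successive right mutation consumes, and to show that the total is bounded above by $a^-/(\lambda_+^2-1)$. Writing $\Pi_j := \rmut^{j-1}(\Pi^-) = \Pi(p_j,q_j,p_{j+1},q_{j+1},1,a_j)$, the edge-length recursion $a_{j+1}=p_ja_j/p_{j+2}$ from Lemma \ref{lma:muteffect} gives by induction $a_j = p_1p_2a^-/(p_jp_{j+1})$; the proof of the same lemma also shows that the new right vertex $x_2^{(j+1)}$ of $\Pi_{j+1}$ lies on the ray $R_2$ of $\Pi_j$ at affine distance $\tau_2^{(j)} := p_ja_j/p_{j+2} = p_1p_2a^-/(p_{j+1}p_{j+2})$ from $x_2^{(j)}$.

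Because mutation merely changes charts on the same integral affine base (Lemma \ref{lma:mutationdoesntchangeanything}), the ray $R_2$ of $\Pi_j$ is intrinsically just the continuation of the ray $R_2$ of $\Pi^-$ beyond $x_2^{(j)}$. Translating to the original chart of $\Pi^-$, this means the successive vertices $x_2^{(j+1)}$ all lie on the single ray $R_2$ of $\Pi^-$, with $x_2^{(j+1)}$ at cumulative affine distance $\sum_{k=1}^{j}\tau_2^{(k)}$ from $x_2$. The right mutations of $\Pi^-$ therefore fit inside $\Pi^-_{\ell_1,\ell_2}$ provided $\sum_{k\geq 1}\tau_2^{(k)} < \ell_2$.

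To bound this sum I would first prove the Mori-sequence identity $p_k^2-p_{k-1}p_{k+1}=\Delta$ inductively from $p_{k+2}=\delta p_{k+1}-p_k$. Dividing by $p_{k-1}p_k$ turns it into $p_{k+1}/p_k-p_k/p_{k-1}=-\Delta/(p_{k-1}p_k)$, and telescoping (using $p_{k+1}/p_k\to\lambda_+$) yields the closed form
\[\sum_{k\geq 1}\frac{1}{p_kp_{k+1}} \;=\; \frac{1}{\Delta}\left(\frac{p_2}{p_1}-\lambda_+\right).\]
Substituting and simplifying with $\lambda_+\lambda_-=1$, $\lambda_++\lambda_-=\delta$, $\Delta=(p_2-\lambda_+p_1)(p_2-\lambda_-p_1)$, and $\lambda_+^2-1=\lambda_+(\lambda_+-\lambda_-)$, the inequality $\sum\tau_2^{(k)}\leq a^-/(\lambda_+^2-1)$ should reduce after clearing denominators to $(p_2-\lambda_+p_1)^2\geq 0$, which holds strictly in the increasing regime $p_2/p_1>\lambda_+$.

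Assuming $\ell_2>a^-/(\lambda_+^2-1)$, every $x_2^{(j+1)}$ therefore lies strictly inside the segment from $x_2$ to $y_2$ along $R_2$. Nodal slides can be used to keep each focus-focus point close to its associated vertex, so that the branch cuts of every $\Pi_j$ are short and stay inside $\Pi^-_{\ell_1,\ell_2}$. By Section \ref{sct:almosttoric} the visible pinwheels $L_{p_j,q_j}$ and $L_{p_{j+1},q_{j+1}}$ of each $\Pi_j$ then lie inside $U_{\Pi^-}(\ell_1,\ell_2)$, yielding the full Mori sequence. I expect the conceptually subtlest point to be the identification in the second paragraph, where one must carefully translate vertex positions between different polygon charts via the common intrinsic integral affine base.
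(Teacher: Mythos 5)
Your proposal follows the same overall strategy as the paper: identify the affine length of $R_2$ consumed at each right mutation, and verify that the infinite total stays below $\ell_2$. The formula you obtain for the amount consumed at the $j$th step, namely $\tau_2^{(j)} = p_j a_j / p_{j+2} = p_1 p_2 a^- / (p_{j+1} p_{j+2})$, is exactly the $a_j$ appearing in the paper's recursion, and the paper's Figure illustrating how a mutation ``eats up'' affine length is precisely the geometric content you flag as ``the conceptually subtlest point'' and justify via Lemma \ref{lma:mutationdoesntchangeanything}.

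Where the two arguments diverge is in how the infinite sum is controlled. The paper argues by monotonicity: the ratios $p_i/p_{i+2}$ increase to $\lambda_-^2$, so each term of the nested sum is dominated by the corresponding term of a geometric series with ratio $\lambda_-^2$, giving the bound $a^-/(\lambda_+^2-1)$. You instead compute the sum exactly, using the Mori-sequence invariant $p_{k+1}^2 - p_k p_{k+2} = \Delta$ to produce a telescoping series; this yields
\[
\sum_{k\geq 1}\tau_2^{(k)} \;=\; \frac{a^-\lambda_-}{\,p_2/p_1-\lambda_-\,},
\]
which is a strictly sharper estimate than the paper's when $p_2/p_1 > \lambda_+$. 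This exact form is a genuine, if modest, improvement, since it shows precisely how far $\ell_2 > a^-/(\lambda_+^2-1)$ overshoots what is actually required. Two small corrections: first, since the $k$th consumption is $p_1p_2a^-/(p_{k+1}p_{k+2})$, the relevant telescoping series starts one index later than $\sum 1/(p_kp_{k+1})$, so you must subtract the leading term $1/(p_1p_2)$ before substituting. Second, after clearing denominators (and using $\lambda_+\lambda_-=1$, $\Delta = (p_2-\lambda_+p_1)(p_2-\lambda_-p_1)$), the inequality $\sum\tau_2^{(k)}\leq a^-/(\lambda_+^2-1)$ reduces to the \emph{linear} inequality $p_2 \geq \lambda_+ p_1$, not to the perfect square $(p_2-\lambda_+p_1)^2\geq 0$; your concluding sentence invoking ``the increasing regime $p_2/p_1>\lambda_+$'' is the correct statement, so this appears to be an algebraic slip rather than a conceptual one. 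Your remark about nodal slides keeping the focus--focus points inside the bounded region is a reasonable precaution (the paper passes over it silently, implicitly allowing the singular fibres to be slid as close to the vertices as needed) and does no harm.
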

\begin{proof}
As we can see in Figure \ref{fig:muteats}, each mutation we perform
``eats up'' a certain amount of the affine length $\ell_2$ of $R_2$:
by Lemma \ref{lma:muteffect}, the first mutation uses
$a_1:=a^-p_1/p_3$ and the $k^{th}$ mutation uses
$a_k:=a_{k-1}p_k/p_{k+2}$. Therefore, in total, to perform
arbitrarily many mutations of this subpolygon, we need $\ell_2$ to
be at least
\[a^-\frac{p_1}{p_3}\left(1+\frac{p_2}{p_4}\left(1+\frac{p_3}{p_5}\left(1+\cdots\right)\right)\right).\]
By the discussion in Section \ref{sct:mori}, since
$\frac{p_2}{p_1}>\lambda_+$, the sequence of quotients
$\frac{p_i}{p_{i+1}}$ is increasing and its limit is $\lambda_-$;
likewise, the sequence
$\frac{p_i}{p_{i+2}}=\frac{p_i}{p_{i+1}}\frac{p_{i+1}}{p_{i+2}}$ is
increasing and its limit is $\lambda_-^2$. Therefore, the infinite
sum is bounded from above by
\[a^-\lambda_-^2\left(1+\lambda_-^2\left(1+\cdots\right)\right)
=\frac{a^-\lambda_-^2}{1-\lambda_-^2}=\frac{a^-}{\lambda_+^2-1},\]
as required. \qedhere

\end{proof}
\begin{figure}
\begin{center}
\begin{tikzpicture}[scale=1.3]
\filldraw[draw=black,thick,fill=gray!60] (0,2) -- (0,0.3) -- (0.3,0.1) -- (5,2);
\draw[dashed,thick] (0,0.3) -- (28/15,11/15);
\draw [decorate,decoration={brace,amplitude=2pt,mirror,raise=4pt},yshift=0pt]
(0,0.3) -- (0.3,0.1) node [sloped,midway,below=0.1] {$a_k$};
\draw [decorate,decoration={brace,amplitude=2pt,raise=4pt},yshift=0pt]
(0,0.3) -- (0,2) node [sloped,midway,above=0.1] {$\ell_1$};
\draw [decorate,decoration={brace,amplitude=10pt,mirror,raise=4pt},yshift=0pt]
(0.3,0.1) -- (5,2) node [sloped,midway,below=0.6] {$\ell_2$};
\node at (0,0.3) {$\bullet$};
\node at (0.3,0.1) {$\bullet$};
\filldraw[draw=black,thick,fill=gray!60] (0,2-2.5) -- (0,0.2-2.5) -- (28/15,11/15-2.5) -- (5,2-2.5);
\draw[dashed,thick] (0,0.3-2.5) -- (28/15,11/15-2.5);
\draw [decorate,decoration={brace,amplitude=10pt,mirror,raise=4pt},yshift=0pt]
(0,0.2-2.5) -- (28/15,11/15-2.5) node [sloped,midway,below=0.6] {$a_{k+1}=\frac{a_kp_k}{p_{k+2}}$};
\draw [decorate,decoration={brace,amplitude=2pt,raise=4pt},yshift=0pt]
(0,0.2-2.5) -- (0,2-2.5) node [sloped,midway,above=0.1] {$\ell_1+a_k$};
\draw [decorate,decoration={brace,amplitude=10pt,mirror,raise=4pt},yshift=0pt]
(28/15,11/15-2.5) -- (5,2-2.5) node [sloped,midway,below=0.6] {$\ell_2-a_{k+1}$};
\node at (0,0.2-2.5) {$\bullet$};
\node at (28/15,11/15-2.5) {$\bullet$};
\draw[->,thick] (5.5,1) to[out=0,in=0] (5.5,0.5-2.5);
\node at (7.4,1-1.75) {$k^{th}$ mutation};
\end{tikzpicture}
\end{center}
\caption{A mutation eats up available affine length. Before the
mutation, the right-hand edge $R_2$ has affine length $\ell_2$;
after mutation, it has lost affine length $a_{k+1}$.}
\label{fig:muteats}
\end{figure}
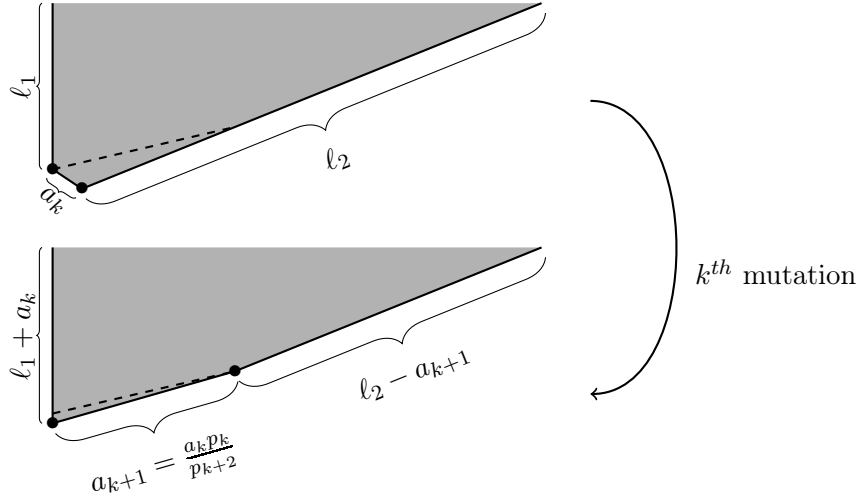

\section{When mutation fails}
\label{sct:mutfail}

\subsection{Immutability: flips and the initial antiflip}
\label{sct:flips}

Suppose we have a right-immutable polygon $\Pi$. We can make a
symplectic deformation $(U_t,\omega_t)$ of $U_\Pi$ and a deformation
of the almost toric structure to put us into a situation where the
mutation can be performed. We will show this by giving a family of
almost toric base diagrams $\Pi_t$ (which determine the symplectic
manifolds $U_t$). See Figure \ref{fig:initialantiflip} for an
illustration of this deformation.

\begin{enumerate}
\item We first perform the right mutation along $B_1$ (as $\Pi$ is
immutable, this will not be a truncated wedge: see Figure
\ref{fig:initialantiflip}). This replaces $B_1$ with an opposite
branch cut $B'_1$.
\item Pick a smooth path $\gamma\colon[0,1]\to\Pi$ such that:
\begin{itemize}
\item $\gamma(0)=z_1$,
\item $\gamma(t)\not\in B_2$ for all $t\in[0,1]$,
\end{itemize}
Let $B_1(t)$ (respectively $B'_1(t)$) be the ray pointing the
direction of $B_1$ (respectively $B'_1$) and emanating from
$\gamma(t)$. Assume that $\gamma(1)$ is sufficiently far to the
right so that $B_1(1)$ intersects $R_2$ at some point $x$.
\item When we perform a mutation along $B'_1(1)$, we therefore obtain a
new truncated wedge having $x$ as a vertex.

\end{enumerate}
\begin{Remark}
Note that this is a continuous deformation of symplectic manifolds:
although it involves steps which look discrete (mutations) these
steps do not affect the symplectomorphism type of \(U_t\) (see Lemma
\ref{lma:mutationdoesntchangeanything}). We are simply choosing to
draw pictures using different branch cuts at different stages of the
deformation, as this allows us to highlight different aspects of the
geometry.

\end{Remark}
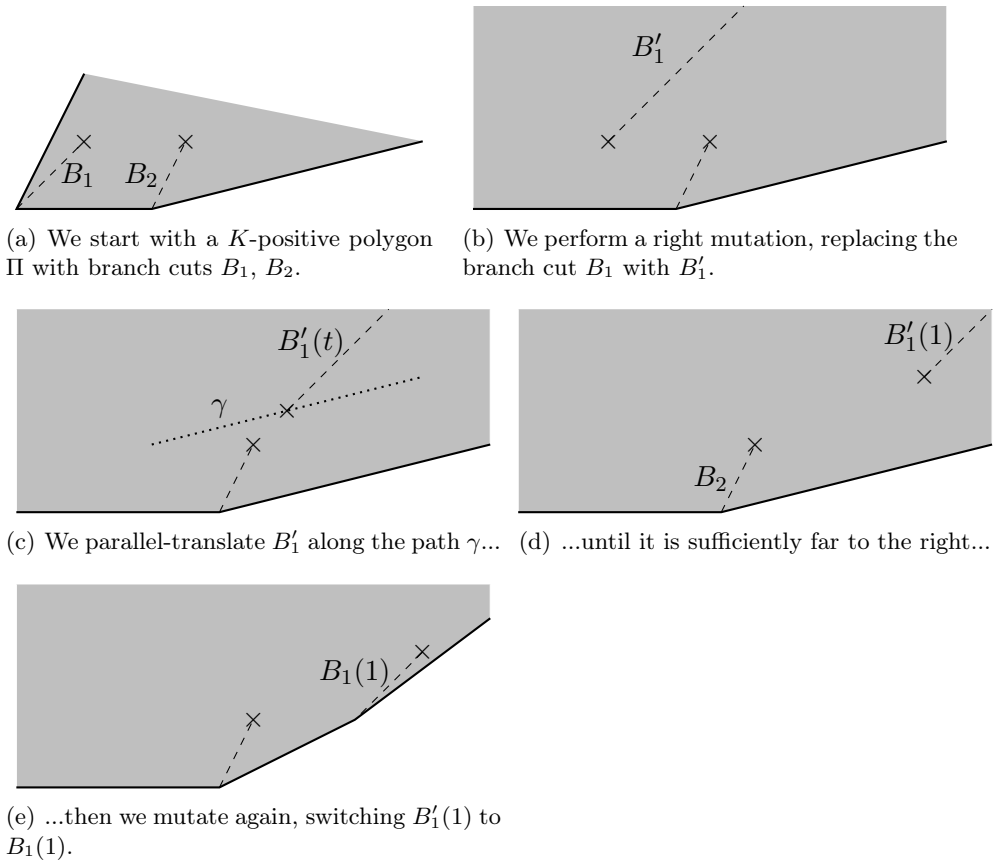
\begin{figure}
\subfigure[We start with a \(K\)-positive polygon $\Pi$ with branch cuts $B_1$, $B_2$.]
{
\begin{tikzpicture}[scale=0.9]
\filldraw[draw=black,thick,fill=gray!50] (1,2) -- (0,0) -- (2,0) -- (6,1);
\draw[dashed] (0,0) -- (1,1);
\draw[dashed] (2,0) -- (2.5,1);
\node at (1,1) {$\times$};
\node at (2.5,1) {$\times$};
\node at (0.5,0.5) [right] {$B_1$};
\node at (2.25,0.5) [left] {$B_2$};
\end{tikzpicture}
}
\hfill
\subfigure[We perform a right mutation, replacing the branch cut $B_1$ with $B'_1$.]
{
\begin{tikzpicture}[scale=0.9]
\filldraw[draw=white,fill=gray!50] (-1,3) -- (-1,0) -- (0,0) -- (2,0) -- (6,1) -- (6,3);
\draw[thick,black] (-1,0) -- (0,0) -- (2,0) -- (6,1);
\draw[dashed] (1,1) -- (3,3);
\draw[dashed] (2,0) -- (2.5,1);
\node at (1,1) {$\times$};
\node at (2.5,1) {$\times$};
\node at (2,2) [above left] {$B'_1$};
\end{tikzpicture}
}
\hfill
\subfigure[We parallel-translate $B'_1$ along the path $\gamma$...]
{
\begin{tikzpicture}[scale=0.9]
\filldraw[draw=white,fill=gray!50] (-1,3) -- (-1,0) -- (0,0) -- (2,0) -- (6,1) -- (6,3);
\draw[thick,black] (-1,0) -- (0,0) -- (2,0) -- (6,1);
\draw[dotted,thick] (1,1) -- (5,2);
\node at (2,1.25) [above] {$\gamma$};
\draw[dashed] (3,1.5) -- (4.5,3);
\draw[dashed] (2,0) -- (2.5,1);
\node at (3,1.5) {$\times$};
\node at (2.5,1) {$\times$};
\node at (1+3,1+1.5) [left] {$B'_1(t)$};
\end{tikzpicture}
}
\hfill
\subfigure[...until it is sufficiently far to the right...]
{
\begin{tikzpicture}[scale=0.9]
\filldraw[draw=white,fill=gray!50] (-1,3) -- (-1,0) -- (0,0) -- (2,0) -- (6,1) -- (6,3);
\draw[thick,black] (-1,0) -- (0,0) -- (2,0) -- (6,1);
\draw[dashed] (5,2) -- (6,3);
\draw[dashed] (2,0) -- (2.5,1);
\node at (5,2) {$\times$};
\node at (2.5,1) {$\times$};
\node at (2.25,0.5) [left] {$B_2$};
\node at (0.6+5,0.6+2) [left] {$B'_1(1)$};
\end{tikzpicture}
}
\hfill
\subfigure[...then we mutate again, switching $B'_1(1)$ to $B_1(1)$.]
{
\begin{tikzpicture}[scale=0.9]
\filldraw[draw=white,fill=gray!50] (-1,3) -- (-1,0) -- (0,0) -- (2,0) -- (4,1) -- (6,2.5) -- (6,3);
\draw[thick,black] (-1,0) -- (0,0) -- (2,0) -- (4,1) -- (6,2.5);
\draw[dashed] (5,2) -- (4,1);
\draw[dashed] (2,0) -- (2.5,1) node {$\times$};
\node at (5,2) {$\times$};
\node at (4,1.7) {$B_1(1)$};
\end{tikzpicture}
}
\caption{A cartoon of an initial antiflip.}\label{fig:initialantiflip}
\end{figure}

\begin{Definition}\label{dfn:flip}
Suppose we have a \(K\)-positive polygon
\[\Pi^+(a^+):=\Pi(p_0,q_0,p_1,q_1,c,a^+).\] Suppose that
$\Pi^-(a^-)$ is the result of performing the aforementioned
operations to $\Pi^+(a^+)$, where $a^-$ is the affine length of the
compact edge in the truncated wedge at the end of the process. We
call $\Pi^-(a^-)$ the {\em initial right-antiflip polygon} of
$\Pi^+(a^+)$ with parameter $a^-$ (initial left-antiflip is defined
in the obvious way). We call the symplectic manifold
$(U_{\Pi^-(a^-)},\omega_1)$ the {\em initial antiflip of the
symplectic form with parameter $a^-$}. We will omit the $a^{\pm}$
when it is unimportant to the discussion.

\end{Definition}
\begin{Remark}
The reverse procedure, in which we begin with a left-immutable
\(K\)-negative polygon and follow the same steps to force a left
mutation, is called the {\em flip}.

\end{Remark}
The parameter $a^-$ may be chosen freely by picking $\gamma$ suitably;
however, when we work with a bounded subset
$\Pi^+_{\ell_1,\ell_2}\subset\Pi^+$ as in Definition
\ref{dfn:bddpolygon}, we will not have complete freedom and $a^-$ will
need to be chosen sufficiently small. Namely, after an initial
antiflip, $\Pi^+(a^+)_{\ell_1,\ell_2}$ is replaced by
$\Pi^-(a^-)_{\ell_1+a^+,\ell_2-a^-}$, so we need $a^-<\ell_2$. If we
wish additionally to ensure infinite right-mutability within this
bounded polygon, we need the stronger inequality
\[\ell_2-a^->\frac{a^-}{\lambda_+^{2}-1},\] by Lemma
\ref{lma:sufficientlysmall}. This can also be achieved by picking
$a^-$ sufficiently small. We deduce the following corollary:

\begin{Corollary}\label{cor:infinitely}
Let $\Pi^+(a^+)=\Pi(p_1,q_1,p_2,q_2,c,a^+)$ be a \(K\)-positive
truncated wedge whose initial antiflip
$\Pi^-(a^-)=\Pi(p_1,q'_1,p_2,q_2,1,a^-)$ is infinitely right-mutable
(the numbers $q'_1,p_2,q_2$ will be defined in Lemma
\ref{lma:antiflipeffect}). If we are given $\ell_1,\ell_2>0$, then
there exists a constant $C>0$ such that, for all $0<a^-\leq C$, the
full Mori sequence of right mutations can be performed on
$\Pi^-(a^-)_{\ell_1+a^+,\ell_2-a^-}$. In particular, the initial
antiflip of the symplectic form with parameter $a^-$ in the range
$(0,C]$ admits a Mori sequence $M(p_1,q_1;p_2,q_2)$ of Lagrangian
pinwheels.

\end{Corollary}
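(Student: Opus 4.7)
The plan is to combine the description of how an initial antiflip affects a bounded subpolygon, summarized in the paragraph just before the corollary, with Lemma \ref{lma:sufficientlysmall} applied to the resulting infinitely right-mutable polygon $\Pi^-(a^-)$. The initial antiflip with parameter $a^-$ replaces $\Pi^+(a^+)_{\ell_1,\ell_2}$ by $\Pi^-(a^-)_{\ell_1+a^+,\,\ell_2-a^-}$, so the affine length available along the new edge $R_2$ for subsequent right mutations is $\ell_2-a^-$. The hypothesis that $\Pi^-(a^-)$ is infinitely right-mutable is part of the statement, so the unrestricted symplectic manifold $U_{\Pi^-(a^-)}$ already contains the Mori sequence of visible Lagrangian pinwheels by the corollary immediately following Definition \ref{dfn:bddpolygon}; the only task is to check that all these mutations can be realized inside the compact piece $U_{\Pi^-(a^-)}(\ell_1+a^+,\,\ell_2-a^-)$.

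By Lemma \ref{lma:sufficientlysmall} applied to $\Pi^-(a^-)$ with right-hand parameter $\ell_2-a^-$, the full infinite sequence of right mutations fits inside the bounded subpolygon as soon as
\[\ell_2 - a^- \;>\; \frac{a^-}{\lambda_+^{2}-1}.\]
This is equivalent to $a^- < \ell_2\bigl(1-\lambda_+^{-2}\bigr)$, so I would simply set
\[C \;=\; \tfrac{1}{2}\,\ell_2\bigl(1-\lambda_+^{-2}\bigr),\]
or indeed any positive constant strictly less than $\ell_2\bigl(1-\lambda_+^{-2}\bigr)$. For every $0<a^-\leq C$ the inequality above holds, so $U_{\Pi^-(a^-)}(\ell_1+a^+,\,\ell_2-a^-)$ contains Lagrangian pinwheels $L_{p_i,q_i}$ for the entire Mori sequence $M(p_1,q_1;p_2,q_2)$; since mutation does not change the underlying symplectomorphism type by Lemma \ref{lma:mutationdoesntchangeanything}, these pinwheels all live simultaneously in the same symplectic manifold, namely the initial antiflip of the form with parameter $a^-$. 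There is no real obstacle here beyond solving a single linear inequality in $a^-$: the geometric content is entirely packaged into the two earlier lemmas, and the corollary is a bookkeeping assembly of them.
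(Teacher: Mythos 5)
Your proof is correct and follows essentially the same route as the paper: the paragraph preceding the corollary already records that after an initial antiflip the bounded subpolygon becomes $\Pi^-(a^-)_{\ell_1+a^+,\,\ell_2-a^-}$, applies Lemma \ref{lma:sufficientlysmall} to get the inequality $\ell_2-a^->a^-/(\lambda_+^2-1)$, and observes this holds for $a^-$ small. You have merely made the choice of $C$ explicit by solving that linear inequality, which is a harmless (and slightly more concrete) way of saying the same thing.
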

\subsection{Numerology of the initial antiflip}

The following lemma is proved using Lemma \ref{lma:recog}; its proof
is very similar to Lemma \ref{lma:muteffect}, and we omit it:

\begin{Lemma}\label{lma:antiflipeffect}
Suppose we have a \(K\)-positive polygon
\[\Pi^+:=\Pi(p_0,q_0,p_1,q_1,c,a^+).\] Let
\[\delta=\sigma(\Pi)=(c-1)p_0p_1+p_1q_0-p_0q_1.\] Given a positive
real number $a^->0$, the initial antiflip polygon $\Pi^-(a^-)$ is
\(\ZZ\)-affine isomorphic to the polygon
\[\Pi^-(a^-)=\Pi(p_1,q'_1,p_2,q_2,1,a^-),\] where:
\begin{align*}
q'_1&=\begin{cases}0\mbox{ if }p_1=q_1=1\\ q_1\mbox{ otherwise,}\end{cases}\\
p_2&:=\delta p_1+p_0,\\
q_2&:=\frac{\delta+p_2q_1}{p_1}.
\end{align*}

\end{Lemma}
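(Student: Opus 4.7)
My plan is to follow the template of Lemma \ref{lma:muteffect}, replayed through the three-move deformation of Section \ref{sct:flips} that defines the initial antiflip. Recall that this consists of: (1) a right mutation along $B_1$; (2) a parallel slide of the resulting reversed branch cut along a path $\gamma$; and (3) a second right mutation along the translated branch cut at its new position. By Lemma \ref{lma:mutationdoesntchangeanything} these moves do not change the underlying symplectic manifold; only the almost toric diagram changes. The net effect is that we obtain a new truncated wedge whose left vertex $x'_1$ is the image of the old right vertex $x_2$ of $\Pi^+$, whose right vertex $x'_2$ sits at the intersection of $B_1(1)$ with $R_2$, and whose compact edge is the piece of $R_2$ between these two points; its affine length is the free parameter $a^-$ determined by the choice of endpoint $\gamma(1)$.

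The left vertex $x'_1$ inherits the singularity type $\frac{1}{p_1^2}(1,p_1q_1-1)$ from the old $x_2$ because singularity type is $\ZZ$-affine invariant. Matching to the left-vertex convention of Definition \ref{dfn:model} then gives $q'_1 = q_1$ in general, and $q'_1 = 0$ in the special case $p_1=q_1=1$, as required by Remark \ref{rmk:warning}. For the right vertex $x'_2$ I would apply Lemma \ref{lma:recog}: compute the $\ZZ$-affine matrix $M$ that sends $R_2$ to the vertical direction with the polygon to its right, and read off $-MB_1$ in the form $(p_2, q_2 + kp_2)$. Substituting the standard parametrisations of $B_1$ and $R_2$ from Definition \ref{dfn:model} and simplifying using $\delta = (c-1)p_0p_1 + p_1q_0 - p_0q_1 = \sigma(\Pi^+)$ should produce the formulas $p_2 = \delta p_1 + p_0$ and $q_2 = (\delta + p_2q_1)/p_1$.

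The main technical obstacle is the algebraic bookkeeping as we compose two mutation matrices (each determined by the vertex type at its corner) with the coordinate change between the right- and left-vertex conventions. Two sanity checks along the way: (a) integrality of $q_2$, which follows from the congruences $p_2 \equiv p_0 \pmod{p_1}$ and $\delta \equiv -p_0 q_1 \pmod{p_1}$; and (b) the claim that the shear invariant of $\Pi^-$ equals $1$, which reflects the fact that the mutation at step (3) produces a single fresh corner against $R_2$, creating a $(-1)$-curve as the zero section in the minimal resolution of $V_{\Pi^-}$, and translates via Lemma \ref{lma:trunc} into $c=1$.
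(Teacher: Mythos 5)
The paper itself omits a detailed proof, stating only that it ``is proved using Lemma \ref{lma:recog}; its proof is very similar to Lemma \ref{lma:muteffect}, and we omit it.'' Your outline reconstructs exactly this intended approach, and the structure is sound: the new left vertex is the old $x_2$ so inherits type $\frac{1}{p_1^2}(1,p_1q_1-1)$, and the new right vertex is identified via Lemma \ref{lma:recog} applied at the intersection of the translated cut with $R_2$. Your observations about integrality of $q_2$ and about why the new shear invariant is $1$ are correct and worth recording.

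One point you should scrutinise is the sign. Note that Lemma \ref{lma:muteffect} produces the recursion $p_3 = \delta p_2 - p_1$ (a \emph{minus} sign), whereas the present lemma asserts $p_2 = \delta p_1 + p_0$ (a \emph{plus} sign). This difference is not cosmetic: it reflects the fact that the branch cut involved in the final mutation of the antiflip is not $B_1$ itself but its reversal $B'_1(1)$, which was produced by the step-1 mutation. If you literally ``read off $-MB_1$'' (with $M$ sending $R_2$ to $(0,-1)$ exactly as in Lemma \ref{lma:muteffect}), you will get $(-p_2,\ast)$, which is the wrong sign; the correct computation is $-MB'_1(1)=+MB_1=(p_2,\ast)$. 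If you carry out the algebra with $M$ built from the direction $(cp_1^2-p_1q_1+1,p_1^2)$ of $R_2$ in $\Pi^+$ and apply it to the branch-cut direction $(p_0-q_0,p_0)$, you will find $MB_1$ has first component $p_0\bigl((c-1)p_1^2-p_1q_1+1\bigr)+p_1^2q_0=\delta p_1+p_0=p_2$, confirming this. A further small consistency check: your derivation will naturally produce $q_2$ in terms of $q'_1$ rather than $q_1$ (one first lands in the new left-vertex convention before computing), and this is what matches the examples in the paper --- e.g.\ the quintic has $p_1=q_1=1$, $q'_1=0$, $\delta=3$, $p_2=5$, and $q_2=(\delta+p_2q'_1)/p_1=3$, whereas the literal formula $(\delta+p_2q_1)/p_1=8$ would need reduction modulo $p_2$. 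So keep track of which convention ($q_1$ vs.\ $q'_1$) enters the formula; the Godeaux example, having $p_1\neq q_1$, does not distinguish them, but the quintic example does.
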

The following lemma is easy to check using Lemma \ref{lma:trunc} and
the definitions of $p_2,q_2$:

\begin{Lemma}
The initial antiflip polygon $\Pi^-$ is a left-immutable,
\(K\)-negative polygon with
\[\sigma(\Pi^-)=-\delta,\quad\Delta(\Pi^+)=\Delta(\Pi^-),\quad\Omega(\Pi^+)=\Omega(\Pi^-).\]
Consequently, both $\Pi^+$ and $\Pi^-$ are truncations of the same
wedge $\pi(\Delta,\Omega)$.

\end{Lemma}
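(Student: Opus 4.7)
The plan is to verify each assertion by direct substitution into the formulas from Lemma \ref{lma:trunc}, combined with the expressions $p_2 = \delta p_1 + p_0$ and $q_2 = (\delta + p_2 q_1)/p_1$ supplied by Lemma \ref{lma:antiflipeffect}. The single identity $p_1 q_2 = \delta + p_2 q_1$ will do essentially all of the heavy lifting.

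Dispatching the easy assertions first: substituting $\Pi^- = \Pi(p_1,q_1,p_2,q_2,1,a^-)$ into the $\sigma$-formula gives $\sigma(\Pi^-) = p_2 q_1 - p_1 q_2 = -\delta$, which is negative because $\delta = \sigma(\Pi^+) > 0$ under the hypothesis that $\Pi^+$ is \(K\)-positive; hence $\Pi^-$ is \(K\)-negative. The left-mutability criterion from Section \ref{sct:mutations} requires $\delta p_1 - p_2 > 0$, but $p_2 - \delta p_1 = p_0 \geq 1$, so $\Pi^-$ is left-immutable. Next I compute
\[\Delta(\Pi^-) = p_1^2 + p_2^2 - \delta p_1 p_2 = p_1^2 + p_2(p_2 - \delta p_1) = p_1^2 + p_0 p_2 = p_1^2 + p_0^2 + \delta p_0 p_1 = \Delta(\Pi^+).\]

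The one genuine calculation is the identity $\Omega(\Pi^+) \equiv \Omega(\Pi^-) \pmod{\Delta}$. Writing out both sides, the difference $\Omega(\Pi^+) - \Omega(\Pi^-)$ equals $p_0 q_0 + \delta p_1 q_0 - (c-1) p_1^2 - p_2 q_2 + \delta p_2 q_1$. To clear the denominator hidden in $q_2$, I would multiply through by $p_1$ (which is coprime to $\Delta$) and use $p_1 q_2 = \delta + p_2 q_1$ together with $p_2^2 - \delta p_1 p_2 = p_0 p_2$ to rewrite $p_1(p_2 q_2 - \delta p_2 q_1)$ as $p_2(\delta + p_0 q_1)$. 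The defining relation $\delta = (c-1)p_0 p_1 + p_1 q_0 - p_0 q_1$ rearranges to $\delta + p_0 q_1 = p_1[(c-1)p_0 + q_0]$, so after cancelling the factor of $p_1$ the difference simplifies to $(c-1)(p_0 p_2 + p_1^2) + q_0(p_2 - p_0 - \delta p_1) = (c-1)\Delta$, which vanishes modulo $\Delta$.

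The main obstacle is bookkeeping rather than depth: one must carefully track the conventional roles of ``left vertex'' and ``right vertex'' in Lemma \ref{lma:trunc}, and handle the degenerate smooth-point cases $(p_1,q_1)=(1,0)$ and $(p_2,q_2)=(1,1)$ from Remark \ref{rmk:warning}, which is where the alternative value $q'_1 = 0$ from Lemma \ref{lma:antiflipeffect} enters. Once the four identities are in hand, the final consequence follows by invoking Lemma \ref{lma:trunc} for each of $\Pi^+$ and $\Pi^-$: since both have $\Delta > 0$ and agree on $(\Delta,\Omega)$, each is \(\ZZ\)-affine isomorphic to a truncation of $\pi(\Delta,\Omega)$.
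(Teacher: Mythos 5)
Your proposal is correct and is precisely the computation the paper delegates to the reader (the paper offers no written proof, only the remark that the lemma ``is easy to check using Lemma \ref{lma:trunc} and the definitions of $p_2,q_2$''). The verifications of $\sigma(\Pi^-)=-\delta$, $K$-negativity, left-immutability (via $\delta p_1 - p_2 = -p_0 < 0$), and $\Delta(\Pi^+)=\Delta(\Pi^-)$ are all correct. For the $\Omega$-computation, there is a harmless overall sign slip: tracking the signs carefully one gets
\[
\Omega(\Pi^+)-\Omega(\Pi^-)=q_0\bigl(p_0+\delta p_1-p_2\bigr)-(c-1)\bigl(p_1^2+p_0p_2\bigr)=-(c-1)\Delta,
\]
rather than $+(c-1)\Delta$ as written, but since the term multiplying $q_0$ vanishes by $p_2=\delta p_1 + p_0$ and $\pm(c-1)\Delta\equiv 0\pmod{\Delta}$ either way, the conclusion $\Omega(\Pi^+)\equiv\Omega(\Pi^-)\pmod{\Delta}$ stands. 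Your closing remark about the degenerate case $p_1=q_1=1$, where $q_1'=0$ enters from Lemma \ref{lma:antiflipeffect}, is a genuine point worth flagging: in that case the formulas from Lemma \ref{lma:trunc} for $\Pi^-$ must be evaluated with $q_1'$ in place of $q_1$, and consistency then pins down which of $q_1$ or $q_1'$ Lemma \ref{lma:antiflipeffect} intends in the formula for $q_2$; in the quintic example this is the case that actually occurs, and the stated value $q_2=3$ is the one obtained using $q_1'$.
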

\subsection{Variation of the cohomology class $[\omega_t]$}
\label{sct:variation}

Each almost toric base diagram in the family $\Pi_t$ from Section
\ref{sct:flips} determines a symplectic manifold, so we get a
symplectic deformation $(U_t,\omega_t)$. The de Rham cohomology group
$H^2_{dR}(U_t)$ is one-dimensional, so the cohomology class
$[\omega_t]$ is determined by its integral over some
fixed\footnote{i.e. constant with respect to the Gauss-Manin
connection.} homology class.

We use as our fixed class the unique class $G_t\in H_2(U_t;\RR)$ such
that $K_{U_t}\cdot G_t=\delta$. Since $K_{U_t}$ is an integral class,
this means that $G_t$ is also an integral class, hence constant in the
family. Recall the class $G_{\Pi}$ from Section \ref{sct:homologyUPi}:
\begin{itemize}
\item When $t=0$, we know that $K_{U_{\Pi^+}}\cdot
G_{\Pi^+}=\sigma(\Pi^+)=\delta$, so take $G_0=G_{\Pi^+}$.
\item When $t=1$, we know that $K_{U_{\Pi^-}}\cdot
G_{\Pi^-}=\sigma(\Pi^-)=-\delta$, so take $G_1=-G_{\Pi^-}$.
\end{itemize}
We know that $\int_{G_{\Pi^+}}\omega_0=p_0p_1a^+$ and
$\int_{G_{\Pi^-}}\omega_1=p_1p_2a^{-}$. Therefore, at the level of
cohomology classes $[\omega_t]$, the deformation of $\omega_t$ gives a
path in $H^2_{dR}(U)=\RR$ from $a^+p_0p_1$ to $-a^-p_1p_2$. In
particular, at some point in this path \(\omega_t\) is exact (at this
point the edge has length zero, so \(G_\Pi\) consists of two multiples
of Lagrangian discs sharing a common circle boundary).

The cohomology $H^2_{dR}(U)$ inherits a \(\ZZ\)-affine structure from
its isomorphism with $H^2(U;\ZZ)\otimes\RR$, so there is an intrinsic
notion of affine distances $d_{aff}$ along lines of rational
slope. For surfaces of general type, we use this to give an estimate
on how far one needs to deform $[\omega]$ away from the canonical
class before one gets unbounded Mori sequences of Lagrangian pinwheels
using our antiflip-and-mutate construction:

\begin{Lemma}\label{lma:gap}
Let $\Pi^+=\Pi(p_0,q_0,p_1,q_1,c,a^+)$ be a \(K\)-positive truncated
wedge whose initial antiflip polygon is infinitely right-mutable (so
$\delta=-\sigma(\Pi^+)=\sigma(\Pi^-)\geq 2$). Suppose that a compact
$U_{\Pi^+}(\ell_1,\ell_2)$ embeds symplectically into a symplectic
manifold $(X,\omega)$ with $[\omega]=K_X$. Let $\omega_t$ be the
initial antiflip deformation of the symplectic form on $X$ along the
submanifold $U_{\Pi^+}(\ell_1,\ell_2)$ with parameter $a^-$. Then
there exists a constant $\epsilon>0$ such that $(X,\omega_t)$
contains a Mori sequence of Lagrangian pinwheels when
$d_{aff}([\omega_0],[\omega_t])\in(\delta,\delta+\epsilon]$.
\end{Lemma}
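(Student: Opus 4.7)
The plan is to parametrize the antiflip deformation by the affine length of the compact edge produced after the flip, compute the affine distance in $H^2_{dR}(U)$ as an explicit function of this parameter, and then read off the window of admissible values from Corollary \ref{cor:infinitely}.

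Since the deformation $\omega_t$ is supported inside the submanifold $U:=U_{\Pi^+}(\ell_1,\ell_2)$, it suffices to work in $H^2_{dR}(U)\cong\RR$. The hypothesis $[\omega_0]=K_X$ restricts to $[\omega_0]=K_U$ on $U$ (because $c_1(TU)=c_1(TX)|_U$), so pairing with the integral, Gauss--Manin parallel class $G_t$ of Section \ref{sct:variation} gives $\int_{G_0}\omega_0=K_U\cdot G_0=\delta$. At the antiflip endpoint with parameter $a^-$, using $G_1=-G_{\Pi^-}$ together with the formula $\int_{G_{\Pi^-}}\omega_1=p_1p_2a^-$ from Section \ref{sct:homologyUPi}, I obtain $\int_{G_0}\omega_1=-p_1p_2a^-$. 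Provided $G_0$ is primitive in $H_2(U;\ZZ)/\mathrm{torsion}$, the one-dimensional integral affine structure on $H^2_{dR}(U)$ then gives
\[ d_{aff}([\omega_0],[\omega_1])=\delta+a^-p_1p_2, \]
a strictly increasing function of $a^-$ which equals $\delta$ at $a^-=0$. The same computation applied to intermediate $t$ identifies $d_{aff}([\omega_0],[\omega_t])$ with $\delta+a^-_{\mathrm{eff}}(t)p_1p_2$, where $a^-_{\mathrm{eff}}(t)$ is the effective antiflip parameter at time $t$.

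Next I would apply Corollary \ref{cor:infinitely}: there exists $C>0$ such that for every $a^-\in(0,C]$ the antiflipped manifold contains the full Mori sequence $M(p_1,q_1;p_2,q_2)$ of Lagrangian pinwheels. Under the bijection $a^-\mapsto d_{aff}=\delta+a^-p_1p_2$, the interval $(0,C]$ maps onto $(\delta,\delta+Cp_1p_2]$; setting $\epsilon:=Cp_1p_2$ completes the proof.

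The main obstacle is the primitivity claim for $G_0$: a priori $G_0=n\gamma$ with $\gamma$ primitive and $n$ dividing $\delta$ (since $K_U\cdot\gamma$ must be an integer). If $n>1$, the affine distance at $a^-=0$ is $\delta/n<\delta$, so the window $(\delta,\delta+\epsilon]$ is only reached for $a^->(n-1)\delta/(p_1p_2)$, which imposes a lower bound on the constant $C$ of Corollary \ref{cor:infinitely} (and hence, via Lemma \ref{lma:sufficientlysmall}, on the affine length $\ell_2$). I would verify $n=1$ by a Mayer--Vietoris computation comparing $U_\Pi$ with the toric orbifold $V_\Pi$, using the explicit cycle $G_\Pi=p_1p_2C-p_2L_1-p_1L_2$ from Section \ref{sct:homologyUPi} together with the chain description of $\Delta/\Omega$ in the minimal resolution.
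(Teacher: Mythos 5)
Your argument is essentially the paper's: both proofs boil down to two ingredients, namely (i) the normalization $\int_{G_0}\omega_0=\delta$, which in the paper is computed as $a^+p_0p_1=\sigma(\Pi^+)$ using $[\omega]=K$, and which you obtain directly from the defining relation $K_{U}\cdot G_0=\delta$; and (ii) Corollary~\ref{cor:infinitely}, which converts the condition ``Mori sequence of pinwheels exists'' into the condition $a^-\in(0,C]$, i.e.\ $\int_{G_0}\omega_t\in[-Cp_1p_2,0)$, i.e.\ $d_{aff}\in(\delta,\delta+Cp_1p_2]$. You parametrize by the effective antiflip length $a^-$ rather than by the deformation parameter $t$, but this is only a cosmetic repackaging; both arrive at $\epsilon=Cp_1p_2$.

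The one place you are more careful than the paper is the worry about whether $G_0$ (equivalently $G_{\Pi^+}$) is primitive in $H_2(U;\ZZ)/\mathrm{torsion}$, which is indeed what makes $\alpha\mapsto\int_{G_0}\alpha$ compute affine distance rather than a multiple of it. The paper asserts that $G_t$ ``is also an integral class'' and identifies it with $\pm G_{\Pi^\pm}$, but does not verify primitivity. Your proposed Mayer--Vietoris check (using the explicit chain $G_\Pi=p_1p_2C-p_2L_1-p_1L_2$ and the lens space boundary $L(\Delta,\Omega)$) is a reasonable way to pin this down; it is an implicit hypothesis in the paper's proof as well, so flagging it is a genuine improvement in rigor rather than a deviation in strategy.
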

\begin{proof}
By Corollary \ref{cor:infinitely}, there is a constant $C>0$ such
that the initial antiflip with parameter $a^-\in(0,C]$ contains a
Mori sequence of Lagrangian pinwheels. Therefore $(X,\omega_t)$
contains a Mori sequence of Lagrangian pinwheels whenever
$\int_{G_t}\omega_t\in[-Cp_1p_2,0)$. Let $t_0$ and $t_1$ be the
times such that $\int_{G_{t_0}}\omega_{t_0}=0$ and
$\int_{G_{t_1}}\omega_{t_1}=-Cp_1p_2$ (we have $t_0<t_1$ since the
\(\omega_t\)-area of $G_t$ is decreasing in $t$).

Since $[\omega]=K_X$, the number $a^+p_0p_1$ is integral (it is the
canonical class evaluated on the generator $G_{\Pi^+}\in H_2(U;\ZZ)$
from Section \ref{sct:homologyUPi}). In fact,
$a^+p_0p_1=\delta=-\sigma(\Pi^+)=\sigma(\Pi^-)$. Therefore,
$d_{aff}(\omega_0,\omega_{t_0})=\delta$ and
$d_{aff}(\omega_0,\omega_{t_1})=\delta+Cp_1p_2$, so we take
$\epsilon=Cp_1p_2$. \qedhere

\end{proof}
\subsection{Link with Mori theory}
\label{sct:morilink}

Given a \(K\)-positive polygon $\Pi^+$, we have constructed an initial
antiflip $\Pi^-$ with the property that $U_{\Pi^+}$ is symplectic
deformation equivalent to $U_{\Pi^-}$. This whole discussion was
inspired by results in Mori theory \cite{HTU}. Here is an alternative,
Mori-theoretic proof that $U_{\Pi^+}$ and $U_{\Pi^-}$ are
diffeomorphic:

\begin{Theorem}
Let $\Pi^+$ be a \(K\)-positive truncated wedge and let $\Pi^-$ be its
initial antiflip. The manifolds $U_{\Pi^+}$ and $U_{\Pi^-}$ are
diffeomorphic.
\end{Theorem}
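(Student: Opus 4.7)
The plan is to realise the pair $(V_{\Pi^+},V_{\Pi^-})$ as the central fibres of the two sides of a 3-fold flip in the sense of Mori's minimal model program, using the k2A flip of Mori \cite{Mori} as organised combinatorially in \cite{HTU}, and to extract the diffeomorphism $U_{\Pi^+}\cong U_{\Pi^-}$ from the fact that a 3-fold flip is an isomorphism outside a codimension-two flipping locus contained in those central fibres.

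The first step is to invoke the fact established in the preceding subsection that $\Pi^+$ and $\Pi^-$ are truncations of the same wedge $\pi(\Delta,\Omega)$. This means $V_{\Pi^+}$ and $V_{\Pi^-}$ are two distinct partial toric resolutions of the cyclic quotient surface singularity $\frac{1}{\Delta}(1,\Omega)$, and fit into a common contraction diagram $V_{\Pi^+}\to \frac{1}{\Delta}(1,\Omega)\leftarrow V_{\Pi^-}$ in which the curves $C_{\Pi^\pm}$ are precisely the exceptional loci collapsed to the singular point.

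Next I would take \(\mathbb{Q}\)-Gorenstein smoothings $\mathcal{X}^\pm\to\mathbb{C}$ of $V_{\Pi^\pm}$; these exist because all singularities of $V_{\Pi^\pm}$ are Wahl, and by construction their smooth fibres are diffeomorphic to $U_{\Pi^\pm}$. The contractions above extend to a birational diagram of 3-folds $\mathcal{X}^+\to\mathcal{Y}\leftarrow\mathcal{X}^-$, where $\mathcal{Y}$ is the 3-fold with an isolated singularity obtained by smoothing $\frac{1}{\Delta}(1,\Omega)$ together with the relevant vertex. Because $\sigma(\Pi^+)=+\delta$ and $\sigma(\Pi^-)=-\delta$, the curve $C_{\Pi^-}\subset\mathcal{X}^-$ is \(K\)-negative while $C_{\Pi^+}\subset\mathcal{X}^+$ is \(K\)-positive, which is exactly the numerical signature of a 3-fold flip; invoking the existence of the k2A flip from \cite{Mori,HTU}, the birational map $\mathcal{X}^-\dashrightarrow\mathcal{X}^+$ is then a genuine 3-fold flip.

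Since a flip is an isomorphism away from its flipping locus, and that locus lies inside the central fibres, the induced map on any smooth fibre $z\neq 0$ is a biholomorphism, and therefore a diffeomorphism $U_{\Pi^+}\cong U_{\Pi^-}$. The main obstacle is matching the combinatorial data of our truncated wedges --- the Wahl types $\frac{1}{p_i^2}(1,p_iq_i-1)$ at the vertices, the shear invariant, and the relations of Lemma~\ref{lma:antiflipeffect} --- to the numerical input required for the k2A flip of \cite{HTU} to apply; the shared-wedge equalities $\Delta(\Pi^+)=\Delta(\Pi^-)$ and $\Omega(\Pi^+)=\Omega(\Pi^-)$ encode exactly this compatibility, so once the dictionary is pinned down the conclusion is immediate.
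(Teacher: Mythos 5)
Your proof takes essentially the same route as the paper's: realise $V_{\Pi^\pm}$ as the special fibres of \(\mathbb{Q}\)-Gorenstein smoothings, identify the birational map between the two 3-folds as a Mori flip (citing the k2A analysis of \cite{Mori,HTU}), and deduce the diffeomorphism of smooth fibres from the fact that a flip is an isomorphism away from the flipping locus. The only cosmetic difference is that you make the contraction diagram over the common wedge $\pi(\Delta,\Omega)$ explicit, whereas the paper starts directly from the $K_{\mathcal{V}^-}$-negative curve $C_{\Pi^-}$ and cites the proof of Corollary~3.23 in \cite{HTU} for the verification that the flip's central fibre is $V_{\Pi^+}$ — the same numerical bookkeeping you flag as the ``main obstacle'' at the end.
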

\begin{proof}
The variety $V_{\Pi^-}$ admits a \QG smoothing
$\pi^-\colon\mathcal{V}^-\to\CC$. The curve $C_{\Pi^-}\subset
V_{\Pi^-}\subset\mathcal{V}^-$ is a
\(K_{\mathcal{V}_{\Pi^-}}\)-negative curve, and, in this situation,
Mori theory furnishes us with a {\em flip}
$\pi^+\colon\mathcal{V}^+\to\CC$ such that:
\begin{itemize}
\item $\pi^+\colon\mathcal{V}^+\to\CC$ is a \QG smoothing of
$V_{\Pi^+}$;
\item there is a biholomorphism $f\colon\mathcal{V}^+\setminus
C^+\to\mathcal{V}^-\setminus C^-$ such that $\pi^-=\pi^+\circ f$.
\end{itemize}
See \cite{HTU} (proof of Corollary 3.23, page 44 of arXiv version)
for a justification of the particular numbers involved in the
definitions of the polygons $\Pi^{\pm}$.

The smooth fibre of the \QG smoothing
$\pi^{\pm}\colon\mathcal{V}^{\pm}\to\CC$ is diffeomorphic to
$U_{\Pi^{\pm}}$, and since $\mathcal{V}^-$ and $\mathcal{V}^+$ are
fibre-preservingly biholomorphic away from the singular fibre this
means that $U_{\Pi^-}$ and $U_{\Pi^+}$ are diffeomorphic to one
another. \qedhere

\end{proof}
Of course, in Mori theory, a \QG smoothing with at worst canonical
singularities of {\em any} \(K\)-negative $V_\Pi$ (not necessarily an
initial antiflip) admits a flip. In terms of our pictures, the
algorithm to find the flip is to perform left mutations {\em down} the
Mori sequence until your \(K\)-negative polygon is not longer
left-mutable. At that point, one of two things happens:
\begin{itemize}
\item the polygon becomes left-immutable, in which case you perform the
flip as in Definition \ref{dfn:flip};
\item the polygon becomes borderline for left-mutability.
\end{itemize}
In the borderline case, $B_2$ is parallel to $R_1$. In this case,
there is a visible surface in the almost toric base $\Pi$, connecting
the singular point $z_2$ at the end of $B_2$ to the edge $R_1$
(visible surfaces are surfaces which project to paths in the almost
toric base; see (Definition 7.2, \cite{Symington})). This visible
surface is a symplectic \(-1\)-sphere (see Symington \cite{Symington},
Lemma 7.11). This corresponds to the phenomenon of {\em divisorial
contraction} in the minimal model programme; rather than the 3-fold
\QG smoothing of $V_\Pi$ admitting a flip along $C_\Pi$, a whole
surface can be contracted; this surface is the union of $C_\Pi$ and
all these visible \(-1\)-spheres.

\begin{center}
\begin{tikzpicture}
\filldraw[draw=black,thick,fill=gray!50] (0,3) -- (0,0) -- (2,0) -- (5,3);
\draw[dashed] (0,0) -- (2,2) node {$\times$};
\draw[dashed] (2,0) -- (2,0.5) node {$\times$};
\draw[dotted,thick] (2,2) -- (3,1);
\node at (4,0) (label)[right] {visible \(-1\)-sphere};
\draw[->] (label.north west) to[bend left] (2.5,1.5);

\end{tikzpicture}
\end{center}
\begin{Remark}
We remark that the term ``antiflip'' is not always a well-defiend
operation in algebraic geometry: not only is there a whole Mori
sequence of antiflips, but it is entirely possible for a 3-fold
containing a curve $C$ with $K\cdot C>0$ (e.g. some \QG smoothings
of $V_\Pi$ for a \(K\)-positive $\Pi$) not to arise as a flip at
all. See \cite{HTU} for a discussion of when antiflips exist in the
algebro-geometric sense.

\end{Remark}
\subsection{Flips of type k1A}
\label{sct:k1A}

The paper \cite{HTU} also discusses flips where the \(K\)-negative
surface has only one Wahl singularity, obtained by \QG smoothing
$V_\Pi$ for some \(K\)-negative $\Pi$. We explain by example how this
situation arises in our almost toric pictures.

\begin{exm}
The following chain defines a \(K\)-negative polygon $\Pi$ such that
the QHP $U_\Pi$ is a symplectic filling of $L(11,3)$:
\[[2,5,3]-1-[2,3,2,2,7,3].\] If we \QG smooth the singularity
$[2,5,3]$ and take the minimal resolution of the other singularity
then we find a configuration of spheres $C_1,\ldots,C_6,E$, where
$\bigcup C_i$ is the exceptional locus of the minimal resolution
($[-C_1^2,\cdots,-C_6^2]=[2,3,2,2,7,3]$) and $E$ is a \(-1\)-sphere,
intersecting according to the following graph:

\begin{center}
\begin{tikzpicture}
\node (A) at (0,0) {$\bullet$};\node at (A) [below] {$C_1$};
\node (B) at (1,0) {$\bullet$};\node at (B) [below] {$C_2$};
\node (C) at (2,0) {$\bullet$};\node at (C) [below] {$C_3$};
\node (D) at (3,0) {$\bullet$};\node at (D) [below] {$C_4$};
\node (E) at (4,0) {$\bullet$};\node at (E) [below] {$C_5$};
\node (F) at (5,0) {$\bullet$};\node at (F) [below] {$C_6$};
\node (G) at (2,1) {$\bullet$};\node at (G) [above] {$E$};
\draw (A.center) -- (F.center);
\draw (C.center) -- (G.center);
\end{tikzpicture}
\end{center}

We can also understand this in terms of almost toric pictures. An
almost toric picture of the k1A neighbourhood can be obtained by
performing a single nodal trade the left-hand vertex of $\Pi$. The
minimal resolution of the other vertex can also be performed
torically. We now see the \(-1\)-sphere as a visible surface, since
the branch cut is parallel to the edge representing the sphere $C_3$
in the minimal resolution.

\begin{center}
\begin{tikzpicture}
\filldraw[fill=gray!50,thick,draw=black] (9/5,25/5) -- (0,0) node {$\bullet$} -- (3,0) node {$\bullet$} -- ++ (1/2,1/2) node {$\bullet$} -- ++ (1/2,2/2) node {$\bullet$}-- ++ (2/3,5/3) node {$\bullet$} -- ++ (3/9,8/9) node {$\bullet$} -- ++ (4/12,11/12) node {$\bullet$} -- ++ (25/75,69/75) node {$\bullet$} -- ++ (71/213,196/213);
\draw[thick,dashed] (0,0) -- (2*0.7,5*0.7) node {$\times$};
\draw[thick,dotted] (2*0.7,5*0.7) -- (4+96/290,3/2+48/58);
\node at (3.2,0) [right] {$C_1$};
\node at (3.7,1) [right] {$C_2$};
\node at (4.2,2.2) [right] {$C_3$};
\node at (4.7,3.5) [right] {$C_4$};
\node at (5.1,4.4) [right] {$C_5$};
\node at (5.4,5.3) [right] {$C_6$};
\node at (2*0.7+5*0.3,5*0.7-2*0.3) [above] {$E$};
\end{tikzpicture}
\end{center}

\end{exm}
In our picture, the k1A flip is no different from the k2A flip: one
simply performs one nodal trade and mutation at a time.

\subsection{A topological viewpoint}
\label{sct:top}

An almost toric structure on a truncated wedge $\Pi$ exhibits $U_\Pi$
as a handlebody obtained by attaching two Lagrangian 2-handles (the
pinwheel discs) to $S^1\times B^3$. The process of performing a flip
or initial antiflip is, topologically, a handleslide, from which point
of view it is clear that they are diffeomorphic.

On the other hand, if we think of them as smoothings of singular
orbifolds then the flip, initial antiflip and all the mutations can be
seen as compositions of well-known topological operations:
\begin{enumerate}
\item Find two rational homology balls $B_{p_i,q_i}$, $i=1,2$. Perform
generalised rational blow-up in both balls, yielding
Hirzebruch-Jung chains of exceptional spheres
$[b_{i,1},\ldots,b_{i,r_i}]$ representing the continued fractions
$\frac{p_i^2}{p_iq_i-1}$.
\item If you can find another curve $C$ in the rational blow-up with
self-intersection $-c$ such that the union of the Hirzebruch-Jung
chains and $C$ forms a chain
\[[b_{1,1},\ldots,b_{1,r_1}]-c-[b_{2,1},\ldots,b_{2,r_2}],\] then
continue.
\item Perform blow-up and blow-down on this chain to transform it into
another chain of the form
\[[b'_{1,1},\ldots,b'_{1,r'_1}]-c'-[b'_{2,1},\ldots,b'_{2,r'_2}].\]
with $[b'_{i,1},\ldots,b'_{i,r'_i}]=\frac{(p'_i)^2}{p'_iq'_i-1}$ for
some $p'_i,q'_i$, $i=1,2$.
\item Rationally blow down the bracketed Hirzebruch-Jung chains at either
end to obtain a new 4-manifold with two new rational homology balls
$B_{p'_i,q'_i}$, $i=1,2$.
\end{enumerate}
Such a string of operations need not yield a result diffeomorphic to
the manifold you started with; from this point of view, the fact that
the flip, initial antiflip and its mutations are all diffeomorphic is
something of a miracle.

\begin{exm}
Suppose we can rationally blow-up a $B_{2,1}$ to get a chain $[4]-3$
(we will see an example of this in the quintic surface
later). Starting with the chain $[4,3]$ we can blow up a point on
the \(-4\)-sphere (away from its intersection with the
\(-3\)-sphere) to get $[1,5,3]$, then once on the \(-1\)-sphere then
rationally blow-down the $[2,5,3]$ to get the initial antiflip. If
we want to get the first right mutation of the initial antiflip, we
continue blowing up and down:
\begin{gather*}
[1,2,5,3]\\
[1,5,3]\\
[2,1,6,3]\\
[2,2,1,7,3]\\
[2,3,1,2,7,3]\\
[2,4,1,2,2,7,3]\\
[2,5,1,2,2,2,7,3]\\
[2,5,2,1,3,2,2,7,3]\\
[2,5,3,1,2,3,2,2,7,3]
\end{gather*}
Finally, we can rationally blow-down $[2,5,3]$ and $[2,3,2,2,7,3]$
to get $B_{5,3}$ and $B_{14,9}$.

\end{exm}
\section{Examples}
\label{sct:examples}

Our examples will be built by smoothing certain singular surfaces to
find starting configurations of rational homology balls to which we
can apply Lemma \ref{lma:sufficientlysmall}.

\subsection{Symplectic smoothing}

We wish to consider three fillings of the lens space
\(L(p^2,pq-1)\):
\begin{enumerate}
\item the rational homology ball \(B_{p,q}\),
\item the singularity of type \(\frac{1}{p^2}(1,pq-1)\) (an orbifold filling),
\item the minimal resolution of this singularity.
\end{enumerate}
All three are almost toric (2 and 3 are actually toric) and have the
same contact boundary. Symington defines generalised rational blowdown
as the surgery of almost toric manifolds going from 3 to 1; in other
words, it is a surgery of almost toric symplectic manifolds defined by
performing surgery on the almost toric base diagrams. Similarly:

\begin{Definition}\label{dfn:sympsmooth}
We define symplectic smoothing as the surgery of almost toric orbifolds
going from 2 to 1.

\end{Definition}
\begin{exm}\label{exm:sympsmooth}
The symplectic smoothing of \(V_{\Pi}\) is \(U_\Pi\).

\end{exm}
We remark that you do not need a global almost toric structure to
perform these surgeries, only one over the region where the surgery is
taking place.

\begin{Lemma}\label{lma:sympalgsmooth}
If $V$ is a surface with Wahl singularities, which admits a \QG
smoothing whose total space supports a relatively ample line bundle,
then any smooth fibre of this smoothing is a surface
symplectomorphic to the symplectic smoothing $U$ of \(V\).
\end{Lemma}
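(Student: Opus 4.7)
The plan is to build a symplectomorphism between the smooth fibre $W_z=\pi^{-1}(z)$ (for small $z\neq 0$) of the \QG smoothing $\pi\colon\mathcal{V}\to\CC$ and the symplectic smoothing $U$ of $V$ from Definition~\ref{dfn:sympsmooth}, by matching them piece-by-piece. The relatively ample line bundle on $\mathcal{V}$ furnishes a K\"ahler form $\Omega$ on a neighborhood of $V=\pi^{-1}(0)$, and I take the symplectic form on $V$ used in the construction of $U$ to be $\omega_V:=\Omega|_V$; likewise $\omega_{W_z}:=\Omega|_{W_z}$ is the symplectic form on the smooth fibre.

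First I would handle the local picture near each Wahl singularity $x_i\in V$. Because the \QG deformation space of a Wahl singularity is one-dimensional and unobstructed, a germ of the pair $(\mathcal{V},\pi)$ at $x_i$ is biholomorphic, as a family, to the standard local \QG smoothing of $\frac{1}{p_i^2}(1,p_iq_i-1)$; in particular $W_z$ near $x_i$ is biholomorphic to the standard Milnor fibre, which is the rational homology ball $B_{p_i,q_i}$. To promote this to a symplectic identification I would invoke the symplectic suborbifold neighborhood theorem for isolated cyclic quotient points: any two K\"ahler structures on sufficiently small neighborhoods of a Wahl point are symplectomorphic once their total volumes are matched. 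Applied to $(V,\omega_V)$ this matches a neighborhood of $x_i$ with a standard K\"ahler neighborhood of $\frac{1}{p_i^2}(1,p_iq_i-1)$, and applied in the family $\mathcal{V}\to\CC$ it simultaneously identifies the piece of $W_z$ close to $x_i$ with the standard symplectic $B_{p_i,q_i}$ used in the construction of $U$.

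Away from these local neighborhoods, $\mathcal{V}$ is smooth and $\pi$ is a holomorphic submersion with K\"ahler form $\Omega$. Ehresmann's theorem provides a smooth trivialisation of the family of fibres $\pi^{-1}(\zeta)$ for $\zeta$ near $0$, and since the fibrewise cohomology class of the relatively ample line bundle is locally constant in $\zeta$, the restrictions of $\omega_V$ and $\omega_{W_z}$ to the compact complement of the local neighborhoods represent the same cohomology class under this trivialisation; Moser's theorem then yields a symplectomorphism between $V$ and $W_z$ on this complement. The main obstacle is to arrange that this ``global'' Moser identification agrees on the overlapping collar with the local identifications from the previous paragraph; this is handled by the relative version of Moser's trick, applied so that the isotopy is trivial on the contact-type boundary of each local neighborhood (a lens space $L(p_i^2,p_iq_i-1)$). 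Gluing the local and global symplectomorphisms produces the required symplectomorphism $U\cong W_z$.
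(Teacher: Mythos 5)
Your proposal takes a genuinely different route from the paper: a piece-by-piece local normal form plus Moser argument, versus the paper's parallel-transport-plus-classification argument. The paper proceeds by symplectically parallel transporting the lens space links $L(p_i^2,p_iq_i-1)\subset V$ into the nearby smooth fibre $X$, decomposing $X=U\cup_\Sigma(X\setminus U)$ along these separating contact-type hypersurfaces, and then identifying the inner filling $U$ as a symplectic rational homology ball (Milnor number zero) which, by Lisca's classification of symplectic fillings of lens spaces and the refinement of Bhupal--Ono from deformation equivalence to symplectomorphism (using $H^2(B_{p,q})=0$), must be symplectomorphic to $B_{p,q}$. This entirely avoids any local analysis of the germ of the smoothing.

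There is a genuine gap in your local step. You appeal to a ``symplectic suborbifold neighborhood theorem'' and then claim that ``applied in the family'' it identifies the piece of $W_z$ near $x_i$ with the standard symplectic $B_{p_i,q_i}$. This is not a standard statement and would need substantial justification. The suborbifold neighborhood theorem identifies neighborhoods of a fixed suborbifold in a fixed symplectic orbifold; the object you need to control is a germ of a $1$-parameter smoothing whose total space $\mathcal{V}$ has a $3$-fold terminal singularity at $x_i$, restricted to a nearby smooth fibre. A biholomorphism of germs of families does not preserve the K\"ahler forms, and promoting it to a symplectomorphism of the fibres near $x_i$ would require a Moser-type argument on (or near) a singular total space; you cannot simply match total volumes, since $B_{p,q}$ is a $4$-manifold with boundary and its symplectic type is not determined by its volume. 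The gluing step is similarly under-justified: saying ``relative Moser, trivial on the contact-type boundary'' does not explain how the global parallel-transport identification and the local normal-form identification are made to agree on a collar --- Moser isotopies will move the collar, and matching the Liouville/contact data on $\Sigma$ is exactly the content that the paper's use of convexity and Lisca's theorem packages for you. In short, the idea could conceivably be pushed through, but the two steps you treat as routine (family-parametrized orbifold neighborhood theorem, relative Moser matching on the lens space) are where the real work lies, and the paper's route via parallel transport and the Lisca/Bhupal--Ono classification is designed precisely to sidestep them.
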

\begin{proof}
The relatively ample line bundle yields a symplectic form on all the
fibres (away from the singular locus) and a symplectic connection on
this family of symplectic manifolds. The link of each Wahl
singularity in \(V\) is a lens space \(L(p^2,pq-1)\) of contact type
(equipped with a Milnor-fillable contact structure), and we can
symplectically parallel transport this link into the smooth
fibres. Each smooth fibre \(X\) therefore contains a separating
lens space \(\Sigma\) of contact type; we will write
\(X=U\cup_\Sigma (X\setminus U)\) where \(U\) is the region in \(X\)
which has \(\Sigma\) has convex (rather than concave) boundary. This
subset \(U\) is a symplectic filling of \(\Sigma\).

A \QG smoothing of a Wahl singularity has Milnor number zero, so
\(U\) is a rational homology ball. By Lisca's classification
\cite{Lisca} of symplectic fillings of lens spaces, \(U\) is
diffeomorphic to \(B_{p,q}\). Bhupal--Ono \cite{BhupalOno} showed
that this is a classification up to symplectic deformation, but
\(B_{p,q}\) has trivial second cohomology, so in this case it is a
classification up to symplectomorphism. Thus \(U\) is
symplectomorphic to \(B_{p,q}\), and \(X\) is obtained from \(V\) by
symplectic smoothing. \qedhere

\end{proof}
\subsection{Strategy}

We now explain how to construct examples of symplectically embedded
copies of $U_{\Pi^+}(\ell_1,\ell_2)$ in compact complex surfaces of
general type (for suitable \(K\)-positive polygons $\Pi^+$ and real
numbers $\ell_1,\ell_2$) using algebraic geometry. Then we will
perform the initial antiflip of the symplectic form and obtain an
infinitely mutable $U_{\Pi^-}(\ell_1,\ell_2)$ containing a Mori
sequence of Lagrangian pinwheels.

Recall that a KSBA-stable surface is a complex projective surface with
semi-log canonical singularities and ample dualising sheaf. If $V$ is
a KSBA-stable surface with at worst Wahl singularities then it is
\(\QQ\)-factorial, so we can replace this condition with having ample
canonical bundle; let $k$ be a positive integer such that
$K_V^{\otimes k}$ is very ample. Pulling back a Fubini-Study form
along the \(k\)-canonical embedding $V\to\mathbb{P}((H^0(K_V^{\otimes
k}\chk{)})$ and rescaling by $1/k$ furnishes $V$ with a K\"{a}hler
form $\omega$ satisfying $[\omega]=K_V$.

We can symplectically smooth the singularities of $V$ to obtain a
symplectic manifold $U$ as in Definition \ref{dfn:sympsmooth}. Suppose
that \(V\) is \QG smoothable. Since \(V\) is KSBA-stable, its
canonical bundle is ample, and since amplitude is an open condition,
the relative canonical bundle for this smoothing is ample (at least
for fibres near the singular fibre). By Lemma \ref{lma:sympalgsmooth},
the smooth fibre, which is necessarily a canonically polarised surface
of general type, is symplectomorphic to the symplectic smoothing
\(U\).

\begin{Theorem}\label{thm:ksba}
Let $V$ be a KSBA-stable surface with at worst Wahl
singularities. Suppose that $V$ contains a rational curve passing
through precisely two of its singularities $x_0$ and $x_1$ such that
$x_i$ is a Wahl singularity of type $\frac{1}{p_i^2}(1,p_iq_i-1)$,
and the preimage of $C$ in the minimal resolution of $X_0$ is a
chain \[[b_{0,1},\ldots,b_{0,r_0}]-c-[b_{1,1},\ldots,b_{1,r_1}],\]
with $\tilde{C}_0^2=-c$,
$\frac{p_i^2}{p_iq_i-1}=[b_{i,1},\ldots,b_{i,r_i}]$. Then the
symplectic smoothing $U$ contains a symplectically embedded copy of
$U_{\Pi^+}(\ell_0,\ell_1)$ for some $\ell_0,\ell_1>0$, where
$\Pi^+=\Pi(p_0,q_0,p_1,q_1,c,K_V\cdot C)$.

Let $\Pi^-=\Pi(p_1,q_1,p_2,q_2,1,a^-)$ be an initial right antiflip
of $\Pi^+$ with parameter $a^-$ sufficiently small and suppose that
$\Pi^-$ is infinitely right-mutable. Then the symplectic smoothing
$U$ admits a family of symplectic forms $\omega_t$ such that
$[\omega_0]=K$ and such that $\omega_1$ admits an infinite Mori
sequence $M(p_1,q_1;p_2,q_2)$ of Lagrangian pinwheels.
\end{Theorem}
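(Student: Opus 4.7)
The plan is to build the embedded neighborhood $U_{\Pi^+}(\ell_0,\ell_1)$ in two stages. First, using KSBA-stability, the canonical bundle $K_V$ is ample, so the $k$-canonical embedding equips $V$ with a K\"{a}hler form $\omega$ satisfying $[\omega]=K_V$. Second, the chain hypothesis on the preimage of $C$ in the minimal resolution tells us precisely that a holomorphic neighborhood of $C\cup\{x_0,x_1\}$ inside $V$ is biholomorphic to a neighborhood of the zero section in the toric orbifold $V_{\Pi^+}$, for $\Pi^+=\Pi(p_0,q_0,p_1,q_1,c,a)$ with the value of $a$ still to be determined. Indeed, the invariants $(P_i,Q_i)=(p_i^2,p_iq_i-1)$ at each vertex are fixed by the singularity types, and $c$ is fixed by the central self-intersection; the only free parameter is the affine length of $E_{\Pi^+}$.

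Next I would invoke a symplectic suborbifold neighborhood theorem (this is where the \cite{NgocSepe} reference acknowledged in the introduction enters) to upgrade the biholomorphism near $C\cup\{x_0,x_1\}$ to a symplectomorphism between some compact subdomain $V_{\Pi^+}(\ell_0,\ell_1)\subset V_{\Pi^+}$ (equipped with its toric K\"{a}hler form) and an open neighborhood of $C\cup\{x_0,x_1\}$ inside $(V,\omega)$. The single cohomological parameter matching the two symplectic structures is the area of the central curve, and on the $V$ side this is $\int_C\omega=[\omega]\cdot[C]=K_V\cdot C$, while on the toric side it is exactly the affine length $a$ of $E_{\Pi^+}$; forcing these to agree pins down $a=K_V\cdot C$. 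Now symplectic smoothing can be carried out locally in neighborhoods of $x_0$ and $x_1$ using the almost toric models, so by Lemma~\ref{lma:sympalgsmooth} (applied to the \QG smoothing of $V$) the smoothed region is symplectomorphic to $U_{\Pi^+}(\ell_0,\ell_1)\subset U$, yielding the first claim.

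For the second claim, I would apply the initial antiflip deformation from Section~\ref{sct:flips} directly on the embedded copy of $U_{\Pi^+}(\ell_0,\ell_1)$. Because this deformation is constant near $\partial U_{\Pi^+}(\ell_0,\ell_1)$ (it only modifies the almost toric structure in the interior), it extends by the identity to a deformation $\omega_t$ of symplectic forms on all of $U$ with $[\omega_0]=K_V$. Choose $a^-$ small enough that (i) the target truncated wedge $\Pi^-(a^-)$ fits inside $\Pi^+_{\ell_0,\ell_1}$ in the sense that $a^-<\ell_1$, and (ii) the quantitative bound $\ell_1-a^->a^-/(\lambda_+^2-1)$ of Lemma~\ref{lma:sufficientlysmall} holds. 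Then Corollary~\ref{cor:infinitely} furnishes an infinite sequence of right mutations of $\Pi^-(a^-)_{\ell_0+a^+,\ell_1-a^-}$, and by construction each $U_{\rmut^{j-1}(\Pi^-)}$ contains visible Lagrangian pinwheels $L_{p_j,q_j}$ and $L_{p_{j+1},q_{j+1}}$; by Lemma~\ref{lma:mutationdoesntchangeanything} these all live inside the fixed symplectic manifold $(U,\omega_1)$, giving the desired Mori sequence $M(p_1,q_1;p_2,q_2)$.

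The main obstacle is the first step, specifically extracting an \emph{area-preserving} model for a neighborhood of $C\cup\{x_0,x_1\}$ rather than merely a diffeomorphic or deformation-equivalent one. The symplectic orbifold neighborhood theorem must be strong enough to match the toric K\"{a}hler form on $V_{\Pi^+}$ to the restriction of $\omega$ on the nose, not just up to symplectic deformation; any failure there would leave the central edge length ambiguous and would break the identification $a=K_V\cdot C$ that we need to connect the construction to the canonical class in Lemma~\ref{lma:gap}. A secondary technical point, essentially handled already by Lemma~\ref{lma:sufficientlysmall}, is ensuring that the infinite sequence of mutations stays inside the compact subdomain so that the deformation and the pinwheels genuinely live in the compact surface $U$ rather than escaping out the noncompact end of the toric model.
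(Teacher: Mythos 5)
Your proof follows essentially the same route as the paper: invoke the symplectic suborbifold neighbourhood theorem to identify a neighbourhood of $C$ with $V_{\Pi^+}(\ell_0,\ell_1)$ (the paper cites Theorem 11 of Fine--Panov for this, not \cite{NgocSepe}, which is used elsewhere for reconstructing almost toric fibrations), smooth to obtain $U_{\Pi^+}(\ell_0,\ell_1)\subset U$, then apply the initial antiflip and Lemma~\ref{lma:sufficientlysmall}/Corollary~\ref{cor:infinitely}. The concern you raise about matching areas ``on the nose'' is resolved automatically by the Weinstein-style neighbourhood theorem, since the area of $C$ is a symplectic invariant equal to $K_V\cdot C$ on both sides once $[\omega]=K_V$.
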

\begin{proof}
By the symplectic neighbourhood theorem for symplectic suborbifolds
(Theorem 11, \cite{FinePanov}), a neighbourhood of $C$ in $V$ is
symplectomorphic to $V_{\Pi^+}(\ell_0,\ell_1)$ for some
$\ell_0,\ell_1>0$, where $\Pi^+=\Pi(p_0,q_0,p_1,q_1,c,a^+)$ and
$a^+$ is the symplectic area of $C$. Since $[\omega]=K_{V}$, this
means that $a^+=K_{V}\cdot C$.

The symplectic smoothing \(U\) of \(V\) is therefore obtained by
performing the symplectic smoothing on the almost toric region
\(V_{\Pi^+}(\ell_0,\ell_1)\), which (as in Example
\ref{exm:sympsmooth}) yields a copy of \(U_{\Pi^+}(\ell_0,\ell_1)\)
inside \(U\).

The initial right antiflip $U'$ of $U$ along $U_{\Pi^+}$ with
parameter $a^-$ is a symplectically embedded copy of
$U_{\Pi^-}(\ell'_1,\ell'_2)$ for some $\ell'_1,\ell'_2$, where
$\Pi^-$ is the initial right antiflip polygon of $\Pi^+$ with
parameter $a^-$. By Lemma \ref{lma:sufficientlysmall}, if $a^-$ is
sufficiently small then $U'$ admits the required Mori sequence of
Lagrangian pinwheels. \qedhere

\end{proof}
\subsection{The quintic surface}

\begin{Lemma}\label{lma:quintic}
There exists a KSBA-stable surface $V$ with $K^2=5$, $p_g=4$ with a
single singularity of type $\frac{1}{4}(1,1)$ such that its minimal
resolution contains a chain of spheres: \[[4]-3.\] Moreover, $V$
admits a \QG smoothing whose smooth fibre is a quintic surface.
\end{Lemma}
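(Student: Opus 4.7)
The plan is to construct $V$ by contracting a $(-4)$-curve in a near-minimal surface of general type on the Noether line, and then verify that the resulting normal surface is KSBA-stable and $\mathbb{Q}$-Gorenstein smooths to a quintic.

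First I would exhibit a smooth projective surface $\tilde{V}$ with invariants $K^2_{\tilde{V}} = 4$, $p_g = 4$, $q = 0$ (so $\chi = 5$), containing a $(-4)$-curve $E$ and a $(-3)$-curve $\tilde{C}$ meeting transversally at one point. Such $\tilde{V}$ lies on the Noether line and can be obtained from the classical construction of Horikawa surfaces as double covers of $\mathbb{F}_2$ branched over a suitable divisor in $|6B + 4F|$, where $B$ is the negative section and $F$ a ruling. One arranges the branch locus so that two components of its preimage give rise to the pair $E, \tilde{C}$ with the required self-intersections and incidence; alternatively, one can run the construction in reverse, starting from a rational surface containing the chain $[4]-3$ and performing a sequence of $\mathbb{Q}$-Gorenstein smoothings in the opposite direction.

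Next, contract $E$ to produce a normal surface $V$ with a single $\frac{1}{4}(1,1)$ singularity, through which the image $C$ of $\tilde{C}$ passes. Since the discrepancy of the minimal resolution is $-1/2$, straightforward intersection theory gives $K^2_V = K^2_{\tilde{V}} + 1 = 5$; as $\frac{1}{4}(1,1)$ is a rational singularity, $p_g(V) = p_g(\tilde{V}) = 4$. Ampleness of $K_V$ (equivalent to KSBA-stability in our setting) reduces by Nakai--Moishezon to checking positivity of $\left(K_{\tilde{V}} + \tfrac{1}{2}E\right)\cdot D$ for every irreducible curve $D \neq E$ on $\tilde{V}$, a check controlled by the near-minimality of $\tilde{V}$ and the explicit form of the exceptional configuration.

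It remains to verify $\mathbb{Q}$-Gorenstein smoothability and identify the smooth fibre. Since the unique singularity is Wahl, local $\mathbb{Q}$-Gorenstein deformations are smooth and one-dimensional, and the obstruction to globalising them lies in an $H^2$ group which vanishes either by direct computation using the logarithmic tangent sheaf of $\tilde{V}$ along $E$, or by appealing to general smoothability results for stable surfaces with Wahl singularities whose invariants correspond to a smooth component of the KSBA moduli. A $\mathbb{Q}$-Gorenstein smoothing preserves $K^2 = 5$, $p_g = 4$, $q = 0$; since $K^2 = 5 > 2p_g - 4$ puts the smooth fibre strictly above Horikawa's bound, its canonical map is birational onto a quintic in $\cp{3}$, so the general fibre is a quintic surface. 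The main obstacle is the first step: producing $\tilde{V}$ explicitly, with the right incidence of a $(-4)$-curve and a $(-3)$-curve in a surface of general type with the prescribed invariants; once this is in hand, the remaining steps are essentially bookkeeping in deformation theory and intersection numerics.
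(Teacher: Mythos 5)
Your overall strategy -- realise $V$ by contracting a $(-4)$-curve in a minimal Horikawa surface with $K^2 = p_g = 4$ built as a double cover, and then appeal to vanishing of the obstruction group to conclude $\QQ$-Gorenstein smoothability -- is the same strategy used in the paper, which cites Rana for the fact that the minimal resolution of such a stable quintic is a Horikawa surface with $K^2 = p_g = 4$ carrying a $(-4)$-sphere, and for the vanishing of $H^2(V,T_V)$.

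However, there are two concrete gaps. First, your proposed branch divisor $|6B+4F|$ on $\mathbb{F}_2$ does not have the right numerics: with $B^2=-2$, $B\cdot F=1$, $F^2=0$ and $K_{\mathbb{F}_2}=-2B-4F$, one finds $K_{\mathbb{F}_2}+\tfrac{1}{2}(6B+4F)=B-2F$ with $(B-2F)^2=-6$, so the double cover would have $K^2=-12$, not $4$. To stay on the Noether line you would need $|6B+12F|$ on $\mathbb{F}_2$; the paper instead takes the branch curve in $|\mathcal{O}(6,6)|$ on $\cp{1}\times\cp{1}$. Second, and more substantively, you write ``one arranges the branch locus so that two components of its preimage give rise to the pair $E,\tilde{C}$ with the required self-intersections and incidence,'' but this arrangement is precisely the nontrivial content of the lemma and cannot be left implicit. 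The paper makes it explicit: one needs a $(6,6)$-curve $B$ on $\cp{1}\times\cp{1}$ that is everywhere tangent to the diagonal (six tangency points) and everywhere tangent to one ruling $\cp{1}\times\{z\}$ (three tangency points), so that the preimages of the diagonal and of that ruling each split into two rational components of self-intersection $-4$ and $-3$ respectively, and it then exhibits an explicit polynomial $1-2y^3+y^6+2x^3-xy^5-2x^5y+x^6y^6$ realising these tangencies. Without producing such a $B$ (or an equivalent explicit configuration), your argument establishes the framework but not the existence claim.
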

\begin{proof}
Following Rana \cite{Rana}, observe that the minimal resolution of a
stable quintic surface with a $\frac{1}{4}(1,1)$ singularity is a
Horikawa surface with $K^2=p_g=4$ containing a
\(-4\)-sphere. Moreover, such stable quintic surfaces $V$ are always
\QG smoothable, since the local-to-global obstruction group
$H^2(V,T_V)$ vanishes by (\cite{Rana}, Theorem 4.10). Let
$B\subset\cp{1}\times\cp{1}$ be a curve of bidegree $(6,6)$; the
branched double cover of $\cp{1}\times\cp{1}$ branched over $B$ is a
Horikawa surface of the required type.
\begin{itemize}
\item If $B$ intersects the diagonal at six points each with
multiplicity 2 then the preimage of the diagonal contains two
irreducible rational \(-4\)-spheres (intersecting at four points).
\item If $B$ intersects $\cp{1}\times\{z\}$ at three points each with
multiplicity 2 then the preimage of this ruling is a pair of
rational \(-3\)-spheres (intersecting at three points).
\end{itemize}
If we have found such a $B$ then we obtain a $[4]-3$ configuration
in the minimal resolution of a stable quintic.

One can verify that the curve $B$ given in the affine chart
$([x:1],[y:1])$) by $\{1-2y^3+y^6+2x^3-xy^5-2x^5y+x^6y^6=0\}$ has the
required properties: it is smooth, it intersects the ruling
$\{x=0\}$ at the three points $(0,\mu)$, $\mu^3=1$, each with
multiplicity two, and it intersects the diagonal at the six points
$(\mu,\mu)$, $\mu^6=1$, each with multiplicity two. \qedhere

\end{proof}
By Theorem \ref{thm:ksba}, this implies that the smooth quintic
surface contains a symplectically embedded $U_{\Pi^+}(\ell_1,\ell_2)$
where $\Pi^+=\Pi(2,1,1,1,3,a^+)$ with $a^+=K_V\cdot
C=\frac{\delta}{p_1p_2}=\frac{3}{2}$, and that its initial right
antiflip contains a Mori sequence of Lagrangian pinwheels. In this
case, we have $\delta=3$ and the initial antiflip polygon is
$\Pi^-=\Pi(1,0,5,3,1,a^-)$, so the relevant Mori sequence is
$M(1,0;5,3)$.

\subsection{A Godeaux surface}

\begin{Lemma}\label{lma:godeaux}
There exists a KSBA-stable surface $V$ with $K^2=1$, $p_g=0$ with
one ordinary double point and four Wahl singularities with continued
fractions \[[7,2,2,2], [3,5,2], [6,2,2], [4],\] such that its
minimal resolution contains a chain of spheres:
\[[2,2,6]-1-[3,5,2]\] Moreover, $V$ admits a \QG smoothing whose
smooth fibre is a simply-connected Godeaux surface.
\end{Lemma}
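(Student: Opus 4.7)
The plan is to construct $V$ as a partial contraction of a suitable smooth projective surface $Y$, in direct parallel with the Horikawa-style argument used for Lemma \ref{lma:quintic}. First I would look for a $Y$ containing five pairwise disjoint Hirzebruch--Jung chains of smooth rational curves with continued fractions $[7,2,2,2]$, $[3,5,2]$, $[6,2,2]$, $[4]$, and $[2]$ (the last producing the ordinary double point), together with a single $(-1)$-curve $C$ which meets two of the chains so that the configuration $[2,2,6]-1-[3,5,2]$ appears and which is disjoint from the other three chains. The numerical data of $Y$ must be arranged so that contracting the five chains (but not $C$) yields a surface with $K^2=1$ and $p_g=0$. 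Candidate $Y$'s can be sought as iterated blow-ups of Hirzebruch surfaces or of double covers of $\cp{1}\times\cp{1}$, in the spirit of the Park--Park--Shin and Lee--Park constructions of \QG degenerations of Godeaux surfaces; finding such an explicit $Y$ is the most delicate combinatorial step.

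Having produced $Y$ with the required chains, I would contract the five Hirzebruch--Jung chains to obtain $V$. Semi-log canonicity is automatic, since Wahl and nodal singularities are log terminal, so KSBA stability reduces to ampleness of $K_V$. I would verify this by Nakai--Moishezon, computing $K_V \cdot D$ on the finitely many irreducible divisor classes inherited from $Y$. For \QG smoothability, each Wahl singularity and the node has unobstructed local \QG deformations, so the only obstruction is the global one in $H^2(V, T_V)$ in the sense of Hacking, Lee--Park, and Rana; for surfaces with $p_g=0$ and small $K^2$ one typically verifies vanishing either by semicontinuity from the minimal resolution of $V$ or by a direct cohomological argument once $Y$ is explicit. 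Combined with ampleness of $K_V$, this produces a one-parameter \QG smoothing whose general fibre is canonically polarised, and Lemma \ref{lma:sympalgsmooth} identifies this fibre with the symplectic smoothing $U$ of $V$.

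For the smooth fibre, the invariants $K^2=1$ and $p_g=0$ are deformation-invariant, so the fibre is a minimal surface of general type of Godeaux type. Simple connectivity is genuinely delicate here because the Milnor fibre $B_{p,q}$ has $\pi_1(B_{p,q}) = \ZZ/p$ and so is not simply connected; van Kampen therefore only yields $\pi_1(X) = \pi_1(V^{\mathrm{sm}}) / N$ where $N$ is generated by suitable powers of the meridians to each singular point. One must arrange the construction of $Y$ so that these relations together with $\pi_1(V^{\mathrm{sm}})$ kill the whole group. This is precisely the bookkeeping carried out in the Lee--Park and Park--Park--Shin constructions, so in practice I would cite the existing classifications of \QG degenerations of the simply-connected Godeaux surface to conclude this step.

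The main obstacle of the argument is the very first step: the combinatorial realisation of an explicit $Y$ carrying the hybrid chain $[2,2,6]-1-[3,5,2]$ alongside four further disjoint chains $[7,2,2,2]$, $[6,2,2]$, $[4]$, $[2]$, inside a surface whose numerics recover a Godeaux degeneration. Once such a $Y$ has been identified, the stability, smoothability and simply-connectedness steps proceed by the by-now-standard toolkit for \QG degenerations of surfaces of general type.
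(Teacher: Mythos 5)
There is a genuine gap, and you name it yourself: the ``very first step,'' the combinatorial realisation of a smooth surface $Y$ carrying the chain $[2,2,6]-1-[3,5,2]$ together with the remaining four disjoint chains $[7,2,2,2]$, $[6,2,2]$, $[4]$, $[2]$, is left entirely open. Everything downstream in your sketch (Nakai--Moishezon for ampleness of $K_V$, vanishing of the local-to-global obstruction in $H^2(V,T_V)$, the van Kampen bookkeeping for $\pi_1$ of the smoothing) is a reasonable outline of the standard toolkit, but without the explicit $Y$ there is no proof. The existence of $V$ with precisely these singularities and this chain of rational curves is the actual content of the lemma.

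It is also worth noting that the paper's proof does \emph{not} take the route you propose. It does not construct $V$ as a direct contraction of HJ chains on some fresh smooth $Y$; instead it cites Urz\'{u}a \cite{Urzua}, Section~5, where $V$ is obtained by applying a (Mori) flip to the stable Godeaux degeneration of Lee and Park \cite{LP07}. This is philosophically the same mechanism the rest of the paper is casting into symplectic terms: flips and antiflips transform one KSBA-stable degeneration into another while leaving the generic fibre unchanged. That last point is what makes the flip route so economical: because the smooth fibre is unaffected by the flip, the invariants $K^2=1$, $p_g=0$, and simple connectivity of the smoothing are inherited verbatim from the Lee--Park example, and one does not need to run the van Kampen or obstruction-vanishing arguments afresh for the new singular surface $V$. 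In your direct approach each of these would need to be re-established from scratch once $Y$ is found. If you want to repair your proposal, either carry out the explicit blow-up/contraction construction of $Y$ (which is what Lee--Park do for \emph{their} stable surface, but not for this $V$), or adopt the flip viewpoint and start from the Lee--Park degeneration.
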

\begin{proof}
This surface is constructed in (\cite{Urzua}, Section 5) by flipping
an example of Lee and Park \cite{LP07}. \qedhere

\end{proof}
Below, we reproduce Figure 5 from \cite{Urzua} which illustrates a
configuration of curves in the minimal resolution of $V$ including the
chain we want (in red). The solid curves are collapsed by the minimal
resolution to give the ordinary double point and four Wahl
singularities of $V$. The dashed curves become rational curves in $V$.

\begin{center}
\begin{tikzpicture}
\draw[red] (0,0) -- (6,0) node [above,midway] {$-3$};
\draw (0,-4.8) -- (6,-4.8) node [below,midway] {$-7$};
\draw[dashed] (0.2,0.2) to[out=-90,in=0] (-0.2,-1);
\draw[dashed] (-0.2,-0.8) to[out=-45,in=45] (-0.2,-2);
\draw (-0.2,-1.8) to[out=-45,in=45] (-0.2,-3);
\draw (-0.2,-2.8) to[out=-45,in=45] (-0.2,-4);
\draw (0.2,-5) to[out=90,in=0] (-0.2,-3.8);
\draw[dashed] (0.2,-2.4) -- (-0.8,-2.4);
\draw (-0.65,-1.8) -- (-0.65,-3);
\node at (0,-0.4) [left] {$-2$};
\node at (0,-1.4) [right] {$-2$};
\node at (0,-2) [right] {$-2$};
\node at (0,-3.4) [right] {$-2$};
\node at (0,-4) [right] {$-2$};
\node at (0.2,-2.4) [below] {$-2$};
\node at (-0.8,-1.8) [left] {$-2$};
\draw[red,dashed] (1.5,0.2) -- (1,-1) node [midway,left] {$-1$};
\draw[red] (1,-0.8) -- (1.5,-2) node [pos=0.7,left] {$-6$};
\draw[red] (1.5,-1.8) -- (1,-3) node [pos=0.6,right] {$-2$};
\draw[red] (1,-2.8) -- (1.5,-4) node [pos=0.3,right] {$-2$};
\draw[dashed] (1.5,-3.8) -- (1,-5) node [pos=0.3,left] {$-1$};
\draw[dashed] (1.1,-1.3) to[out=20,in=135] (1.5,-1.2) to[out=-45,in=45] (1.3,-1.7);
\node at (1.7,-1.2) {$-1$};
\draw[red] (3.5,0.2) -- (3,-2) node [left,midway] {$-5$};
\draw[red] (3,-1.8) -- (3.5,-3.5) node [right,pos=0.3] {$-2$};
\draw[dashed] (3.5,-3.3) -- (3,-5) node [left,pos=0.3] {$-1$};
\draw[dashed] (3.3,-0.3) to[out=-20,in=45] (3.6,-1.2) to[out=-135,in=-20] (3,-1.5);
\node at (3.7,-1.3) {$-1$};
\draw[dashed] (4.5,0.2) -- (5,-2) node [right,midway] {$-1$};
\draw (5,-1.8) to[out=-135,in=135] (5,-4);
\node at (4.2,-2.9) {$-4$};
\draw[dashed] (5,-3.8) -- (4.5,-5) node [left,midway] {$-1$};
\draw[dashed] (4.6,-2) to[out=-45,in=45] (4.6,-3.8);
\node at (5.2,-2.9) {$-2$};

\end{tikzpicture}
\end{center}
Theorem \ref{thm:ksba} implies that the simply-connected Godeaux
surface obtained by smoothing $V$ contains a symplectically embedded
$U_{\Pi^+}(\ell_1,\ell_2)$ where $\Pi^+=\Pi(4,3,5,2,1,a^+)$ with
$a^+=K_V\cdot C=\frac{\delta}{4\times 5}=\frac{7}{20}$, and that its
initial right antiflip contains a Mori sequence of Lagrangian
pinwheels. In this case, we have $\delta=7$ and the initial antiflip
polygon is $\Pi^-=\Pi(5,2,39,17,1,a^-)$, so the relevant Mori sequence
is $M(5,2;39,17)$.

\bibliographystyle{plain}
\bibliography{antiflips}
\end{document}